\documentclass[a4paper]{article}

\usepackage{latexsym} 
\usepackage{amsmath}
\usepackage{epsf}
\usepackage[latin1]{inputenc}
\usepackage{graphics}
\usepackage{enumerate}
\usepackage{calc}
\usepackage{color}
\usepackage{amssymb}
\usepackage[active]{srcltx}
\usepackage{pstricks}
\usepackage{epsfig}

\usepackage{bbm}
\usepackage{amsthm}
\usepackage{amscd}
\usepackage{esint}
\usepackage{authblk}

\newcommand{\solutionOperatorStateEquation}{G}
\newcommand{\basisfunction}{\xi}
\newcommand{\energyStateEquation}{E}
\newcommand{\Length}{\text{Len}}
\newcommand{\Interface}{\text{Per}}

\newcommand{\F}{\mathcal{F}}

\newcommand{\p}{\phi}
\newcommand{\la}{\lambda}

\def\d{\partial}

\newcommand{\h}[1]{\widehat{#1}}
\def\e{\varepsilon}

\def\weaks{\stackrel{*}{\rightharpoonup}}

\def\weak{\rightharpoonup}
\newcommand{\R}{\mathbb{R}}

\def\FF{\mathcal{F}}
\renewcommand{\k}{\kappa}

\def\W{\mathcal{W}}

\renewcommand{\d}{\,\mathrm{d}}

\def\XXint#1#2#3{{\setbox0=\hbox{$#1{#2#3}{\int}$}
     \vcenter{\hbox{$#2#3$}}\kern-.5\wd0}}

 \setlength{\parindent}{0em}

\newcounter{bei}

\newcommand{\zwo}[2]{\begin{pmatrix} {#1}\\{#2} \end{pmatrix}}

\renewcommand{\t}{\widetilde}

\newtheorem{theorem}{Theorem}[section]
\newtheorem{lemma}[theorem]{Lemma}
\newtheorem{proposition}[theorem]{Proposition}
\newtheorem{corollary}[theorem]{Corollary}

\usepackage{pdfpages} 
\usepackage{pgfplots}
\pgfplotsset{compat=newest}

\begin{document}
\title{Material Optimization for Nonlinearly Elastic Planar Beams}
\author{Peter Hornung\footnote{FB Mathematik, TU Dresden, 01062 Dresden (Germany)}
, Martin Rumpf and Stefan Simon\footnote{Institut für Numerische Simulation, Universit\"at Bonn, 53115 Bonn (Germany)}}

% \subjclass{49K15, 49Q10, 74P05, 74S05}
%
% 49 Calculus of variations and optimal control; optimization
% 49Kxx  Optimality conditions
% 49K15  Problems involving ordinary differential equations
% 49Qxx  Manifolds
% 49Q10  Optimization of shapes other than minimal surfaces
% 74 Mechanics of deformable solids
% 74Pxx Optimization
% 74P05 Compliance or weight optimization
% 74Sxx Numerical methods
% 74S05   Finite element methods
%
% \keywords{shape optimization, nonlinear elasticity, phase-field model}
%
\date{}
\maketitle

\begin{abstract}
We consider the problem of an optimal distribution of soft and hard material for nonlinearly elastic planar beams.
We prove that under gravitational force the optimal distribution involves no microstructure and is ordered,
and we provide numerical simulations confirming and extending this observation.
\end{abstract}
\begin{center}
 {\bf AMS Subject Classifications: 49K15, 49Q10, 74P05, 74S05}
\end{center}

\section{Introduction}\label{Sec0}

In this article we study shape optimization for nonlinearly elastic planar beams.
We consider the nonlinear bending theory for elastic plates derived
in \cite{FJM}. It assigns to a deformation $u : S\to\R^3$ of a given reference configuration
$S\subset\R^2$ the elastic energy (augmented by a potential energy term)
$$
\int_S |\Pi|^2 + \int_S f\cdot u.
$$
Here $\Pi$ is the second fundamental form of the immersion $u$ and $f : S\to\R^3$
is an external force. The key constraint on the deformation $u$ is that it must be 
an isometric immersion. Assuming that $S$ is a rectangle $(0,1)\times (-1, 1)$ and prescribing
clamped boundary conditions with $u(0,x_2) = (0,x_2,0)$ and $\partial_{x_1} u(0,x_2) = (1,0,0)$ 
for $x_2 \in (-1, 1)$, and with $f = (0,0,-1)$
modelling the gravitation, it is reasonable to assume that (the energy minimizing) $u$ will be of
the form $u(x) = (\gamma_1(x_1),x_2,\gamma_2(x_1)$, for some curve $\gamma : (0, 1)\to\R^2$.
Such an essentially one-dimensional deformation is equivalent to a planar beam.
\\
Changing labels and allowing for the use of two different materials (soft and hard), we are therefore
led to consider the following variational problem: For $\gamma\in W^{2,2}((0,1), \R^2)$
with $|\gamma'| = 1$ (this is the one-dimensional equivalent of the isometry constraint) and
with $\gamma(0) = 0$ and $\gamma'(0) = (1,0)$, consider the energy functional
\begin{equation}
\label{wgc}
\W(\gamma, \chi) = \frac{1}{2}\int_0^1 A(t)\kappa^2(t)\ \d t - \int_0^1 f(t)\cdot\gamma(t)\ \d t.
\end{equation}
Here, $\kappa$ denotes the curvature of $\gamma$ and
$
A(t) = (1 - \chi(t))a + \chi(t) b,
$
where $0 < a < b$ model the two material parameters 
and $\chi : (0, 1)\to\{0, 1\}$ describes the distribution of these two materials.
Thus, $\chi$ and $1-\chi$ are the  characteristic functions of  the hard phase and the soft phase, respectively.
\\
The compliance of a given material distribution $\chi$ is  
$\chi \mapsto \int_0^1 f\cdot\gamma + c_l\cdot\int_0^1 \chi\,.$
Here $c_l$ is a positive parameter, so that the second term penalizes the use of the harder phase $b$.
We seek to find the optimal design $\chi$, which is the one minimizing the  cost functional.
\begin{equation}
\label{intro-compli-2}
\chi\mapsto\int_0^1 f\cdot\gamma_{\chi} + c_l \int_0^1 \chi
\end{equation}
among all $\chi : (0, 1)\to \{0, 1\}$,
where $\gamma_{\chi}$ is the (as we will see) unique minimizer of $\gamma \mapsto \W(\gamma, \chi)$
for given $\chi$.
\\
A typical question is whether this minimum is attained, i.e., whether the optimal design
is `classical' in the sense that no microstructure occurs. Ideally, one would then
like to obtain more precise information about the optimal design. Our main analytical result
answers these questions (see Thm \ref{thm:optimalDesignClassical}):
\medskip

{\it The optimal design is classical. More precisely, there exists $t^\ast\in (0, 1)$
such that $A = b$ on $(0, t^\ast)$ and $A = a$ on $(t^\ast, 1)$.}
\medskip

Our numerical simulations confirm this result. We also go further
and consider numerically more general clamped boundary conditions allowing $\gamma'(0) \neq e_1$.

%%%%%%%%%%%%%%%%%%%%%%%%%%%%%%%%%%%%%%
%%%%%%%%%%%%%%%%%%%%%%%%%%%%%%%%%%%%%%
%%%%%%%%%%%%%%%%%%%%%%%%%%%%%%%%%%%%%%
\section{Setting}

Throughout this article, $I$ denotes the interval $(0, 1)$.
As stated above, the isometry constraint imposed upon the deformation $\gamma : I\to\R^2$
of the reference configuration $I$ is $|\gamma'| = 1$.
For such $\gamma$, its curvature is $\k = \gamma''\cdot n$, where $n = (\gamma')^{\perp}$
is the normal. We are interested in deformations which are clamped at the left edge, i.e.,
$\gamma(0) = 0$ and $\gamma'(0) = e_1$; here and in what follows, $(e_1, e_2)$ denotes
the standard basis in $\R^2$.
\\
For a given function $A\in L^{\infty}(I; [a, b])$ and given
external force $f : I\to\R^2$, the total energy (elastic plus potential energy)
stored in the deformed configuration $\gamma : I\to\R^2$ is given by \eqref{wgc}.
The ansatz for $A$ is $A = \chi b + (1 - \chi)a$ 
for $\chi : I\to \{0, 1\}$. 
However, we will encounter more general $A$ as well.

We introduce the phase $K$ by setting $K(t) = \int_0^t\kappa(s) \d s$ and identify $\mathbb{C}$ and $\R^2$, so that
$$\gamma(t) = \int_0^t e^{iK(s)} \d s\,,$$ 
because $\gamma(0) = 0$. 
It is convenient to introduce $F = \int_t^1 f(s) \d s$ (so that $F' = -f$ and $F(1) = 0$).
Integrating by parts we have $\int_I f(t)\cdot\gamma(t) \d t = \int_I F(t)\cdot e^{iK}(t) \d t$, 
so \eqref{wgc} equals
\begin{equation}
\label{en-4}
\FF(K) := \frac{1}{2}\int_I A(t)(K'(t))^2\ \d t - \int_I F(t)\cdot e^{iK(t)}\ \d t.
\end{equation} 
Note that in terms of $K$, the clamped boundary condition is equivalent to $K(0) = 0$;
the condition $\gamma(0) = 0$ is automatically taken into account by our definition of $\gamma$.
Other boundary conditions can also be included into our scheme, e.g. following
the ideas described in \cite{H-PRE}.
In view of the boundary conditions, the natural space on which the functional $\FF$
given by \eqref{en-4} is defined, is the space
$$
X = \{K\in W^{1,2}(I) : K(0) = 0\}.
$$
Using the direct method in the calculus of variations one easily verifies that there exists a minimizer of $\FF$ on $X$.

\section{The state equation}

Performing variations within the space $X$, we see that critical points 
satisfy
\begin{equation}\label{en-3b} 
\int_I AK'(t)\ \p'(t) \d t - \int_I F(t)\cdot i\,e^{iK(t)}\ \p(t) \d t = 0\mbox{ for all }\p\in C_0^{\infty}((0, \infty)).
\end{equation}
By density, this is equivalent to the assertion that
$K\in X$ satisfies
the equilibrium equation
\begin{equation}
\label{en-3c}
(AK')' = - i\,e^{iK}\cdot F\mbox{ in }X',
\end{equation}
where $X'$ denotes the topological dual of $X$. 
This and formula \eqref{en-3b} are weak formulations of
$(AK')' = - i\,e^{iK}\cdot F$ subject to the boundary conditions
$K(0) = 0$ and $K'(1) = 0$.
The condition $K'(1) = 0$ arises as the natural boundary condition. 
More precisely,
we have the following lemma.

\begin{lemma}\label{lem-el} 
Let $A\in L^{\infty}(I; [a, b])$,  $F\in W^{1,2}(I)$, and assume that $K\in W^{1,2}(I)$ is a 
weak solution of 
\begin{equation}
\label{en-3} 
(AK')' = -F\cdot ie^{iK}\mbox{ in }I.
\end{equation}
Then $k := AK'$ is $C^1(\overline I)$, $K$ is Lipschitz,
and \eqref{en-3b} is equivalent to 
\begin{equation}
\label{en-8}
\begin{split}
k' &= -ie^{iK}\cdot F\mbox{ almost everywhere on }I \mbox{ and } k(1) = 0.
\end{split}
\end{equation}
Moreover, for any $t_0\in [0, 1]$ there exists at most one weak solution $K$ of \eqref{en-3}
with prescribed values of $K(t_0)$ and of $k(t_0)$.
\end{lemma}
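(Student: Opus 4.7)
The plan is to bootstrap the regularity of $K$ directly from the distributional equation \eqref{en-3}, then read off the natural boundary condition at $t=1$ from the enlarged test-function space in \eqref{en-3b}, and finally establish uniqueness via a Gronwall argument that exploits the Lipschitz character of $K\mapsto e^{iK}$.

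For the regularity statements, the hypothesis that $K$ is a weak solution of \eqref{en-3} asserts that $(AK')' = -F\cdot ie^{iK}$ in the distributional sense on $I$. The one-dimensional Sobolev embedding $W^{1,2}(I)\embed C(\o I)$ applied to $F$, combined with the continuity of $K\in W^{1,2}(I)$, shows that the right-hand side is continuous on $\o I$. Thus the distributional derivative of $k = AK'$ is continuous, so $k\in C^1(\o I)$. Since $A\geq a>0$ almost everywhere, the identity $K' = k/A$ exhibits $K'$ as a bounded function, so $K$ is Lipschitz.

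For the equivalence of \eqref{en-3b} and \eqref{en-8}, the $C^1$ regularity of $k$ justifies integration by parts: for any $\p\in C_0^{\infty}((0,\infty))$, which vanishes near $0$ but need not vanish at $1$,
\[
\int_I AK'\,\p'\,\d t \;=\; k(1)\p(1) - \int_I k'\,\p\,\d t \;=\; k(1)\p(1) + \int_I F\cdot ie^{iK}\,\p\,\d t,
\]
where the last step uses the distributional equation. Hence \eqref{en-3b} is equivalent to $k(1)\p(1)=0$ for all admissible $\p$, and choosing any $\p$ with $\p(1)\neq 0$ forces $k(1)=0$. The converse direction is the same computation read backward.

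For uniqueness, suppose $K_1, K_2$ are two weak solutions of \eqref{en-3} with $K_1(t_0)=K_2(t_0)$ and $k_1(t_0)=k_2(t_0)$, where $k_i := AK_i'$. Setting $U = K_1-K_2$ and $V = k_1-k_2$, the previous regularity step yields $U' = V/A$ and $V' = -F\cdot i(e^{iK_1}-e^{iK_2})$ almost everywhere. The elementary bound $|e^{iK_1}-e^{iK_2}|\leq |K_1-K_2|$, together with $A\geq a$, gives $|U'|\leq |V|/a$ and $|V'|\leq \|F\|_{\infty}|U|$, so $\Psi := U^2+V^2$ satisfies $|\Psi'|\leq C\Psi$ for a constant $C$ depending only on $a$ and $\|F\|_{\infty}$. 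Since $\Psi(t_0) = 0$, Gronwall's inequality applied on $[t_0,1]$ and on $[0,t_0]$ gives $\Psi\equiv 0$, hence $K_1\equiv K_2$. The main obstacle here is cosmetic rather than substantial: although the state equation is nonlinear, the nonlinearity enters only through the globally $1$-Lipschitz map $K\mapsto e^{iK}$, which keeps the Gronwall estimate as simple as in the linear case. The only genuine point of care is the correct handling of the test-function space $C_0^{\infty}((0,\infty))$ in \eqref{en-3b}, so that the natural boundary condition $k(1)=0$ arises automatically rather than having to be imposed by hand.
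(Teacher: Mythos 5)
Your proof is correct and follows essentially the same route as the paper: bootstrap the continuity of the right-hand side to get $k\in C^1(\overline I)$ and $K$ Lipschitz, integrate by parts against test functions not vanishing at $t=1$ to extract the natural boundary condition $k(1)=0$, and reduce uniqueness to the first-order system for $(K,k)$ with Lipschitz right-hand side. The only (harmless) differences are that you carry the boundary term $k(1)\p(1)$ explicitly in the integration by parts and spell out the Gronwall estimate where the paper simply invokes uniqueness for Lipschitz ODE systems.
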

\begin{proof}
The right-hand side of Equation \eqref{en-3} is continuous up to the boundary by the hypotheses, 
so clearly $k$ is $C^1$
up to the boundary. In particular, $k$ is bounded, so $K' = A^{-1}k$ implies that
$K$ is Lipschitz.
\\
To prove the asserted equivalence simply note that
for $\p\in C^{\infty}_0((0, \infty))$ the left-hand side of \eqref{en-3b}
equals
$$
\int_I k(t)\p'(t) \d t - \int_I F(t)\cdot ie^{iK(t)}\p(t) \d t = \int_I\left(- F(t)\cdot ie^{iK(t)} - k'(t) \right)\p(t) \d t \,.
$$
This is zero for all $\p\in C^{\infty}_0((0, \infty))$ if and only if 
\eqref{en-8} is satisfied.
\\
To prove uniqueness, we combine \eqref{en-3} with the definition of $k$:
\begin{equation}
\label{en-3a}
\zwo{K}{k}' = \zwo{A^{-1}k}{-F\cdot ie^{iK}},
\end{equation} 
which is an ODE system of the form $u' = G(t, u)$ with uniformly Lipschitz $G$.
Hence its solutions are uniquely determined by their value at a single point.
\end{proof}

\subsection{The state equation for particular forces}

In this section and the next, $A\in L^{\infty}(I; [a, b])$.
Recall that $n = ie^{iK}$ and that $k = AK'$.

\begin{lemma}\label{bigle}
Let $f_0 \in \R^2$ be fixed with $|f_0|=1$ and assume that $f\parallel f_0$ and $f\cdot f_0 > 0$
almost everywhere on $I$. Let $K\in W^{1,2}(I)$ solve 
\begin{equation}
\label{el-int}
(AK')' = - F\cdot ie^{iK}\mbox{ in the sense of distributions on }I;
\end{equation}
so we make no assumptions on the boundary data of $K$.
Then the following are true:
\begin{enumerate}[(i)]
\item \label{bahn-ii}
If $F\cdot ie^{iK} = 0$ on a set of positive length, then $K$ is constant on $I$,
with $e^{iK}\parallel f_0$.
\item \label{bahn-i}
If there exists $c\in\R$ such that $\{t\in I : k(t) = c\}$ has positive length,
then $K$ is constant on $I$, with $e^{iK}\parallel f_0$. In particular, $c$ must be $0$.

\end{enumerate}
\end{lemma}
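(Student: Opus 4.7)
The plan is to establish~(i) by invoking the ODE uniqueness statement in Lemma~\ref{lem-el}, and to deduce~(ii) from~(i) via the classical fact that the derivative of an absolutely continuous function vanishes almost everywhere on each of its level sets. To set up, write $f_0 = e^{i\theta_0}$ and let $G(t) := \int_t^1 f(s)\cdot f_0\, \d s$, so that $F = G\,f_0$ and $G>0$ on $[0,1)$ by the hypothesis $f\cdot f_0 > 0$ a.e.\ A direct computation gives
\[
F(t)\cdot i e^{iK(t)} \;=\; -\,G(t)\,\sin\!\bigl(K(t) - \theta_0\bigr),
\]
which vanishes at $t \in [0,1)$ exactly when $K(t) \in \theta_0 + \pi\Z$. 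Since the right-hand side of \eqref{el-int} is continuous, interior regularity gives $k := AK' \in C^1(I)$ with $k'(t) = -F(t)\cdot i e^{iK(t)}$ pointwise on $I$, and $K$ is Lipschitz.

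For~(i), let $E := \{t \in I : F(t)\cdot i e^{iK(t)} = 0\}$; this is closed of positive length by assumption, so $E\cap[0,1)$ has positive length and, by the computation above, is contained in the level set $\{t : K(t) \in \theta_0 + \pi\Z\}$. The key step is to pick $t_0 \in E\cap[0,1)$ which is simultaneously a Lebesgue density point of that set and a point of differentiability of $K$ (almost every point qualifies). Taking a sequence $t_j \in E\cap[0,1)$ with $t_j \to t_0$, continuity of $K$ together with the discreteness of $\theta_0+\pi\Z$ forces $K(t_j) = K(t_0)$ for large $j$, so the difference quotient of $K$ at $t_0$ is eventually zero and hence $K'(t_0)=0$ and $k(t_0)=A(t_0)K'(t_0)=0$. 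The constant function $\bar K \equiv K(t_0)$ is itself a weak solution of \eqref{el-int}, since $(A\bar K')' \equiv 0$ while $F\cdot i e^{i\bar K} = G\,(f_0\cdot i e^{iK(t_0)}) \equiv 0$; as it matches both $K(t_0)$ and $k(t_0)$, the uniqueness clause of Lemma~\ref{lem-el} yields $K \equiv K(t_0)$ on $I$, proving~(i).

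For~(ii), if $\{k=c\}$ has positive length then absolute continuity of $k$ gives $k'=0$ a.e.\ on this level set; combined with the pointwise identity $k' = -F\cdot i e^{iK}$, this yields that $F\cdot i e^{iK}$ vanishes on a set of positive length, so~(i) applies and $K$ is constant with $e^{iK}\parallel f_0$. But then $k = AK' \equiv 0$, so necessarily $c = 0$. The main obstacle is the density-point argument used in~(i): the hypothesis provides only an almost-everywhere statement, whereas Lemma~\ref{lem-el} demands the pointwise data $K(t_0) \in \theta_0 + \pi\Z$ and $k(t_0) = 0$ at one concrete $t_0$; once this pointwise reduction is in hand, identifying the constant solution and invoking Lemma~\ref{lem-el} is routine, as is the level-set fact needed for~(ii).
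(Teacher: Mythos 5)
Your proof is correct and follows the same overall strategy as the paper's: produce a point $t_0$ at which both $F(t_0)\cdot ie^{iK(t_0)}=0$ and $k(t_0)=0$, observe that the constant function $\bar K\equiv K(t_0)$ is then a solution of \eqref{el-int} because the direction of $F$ is constant, and conclude $K\equiv K(t_0)$ from the uniqueness clause of Lemma~\ref{lem-el}; your reduction of (ii) to (i) via $k'=0$ a.e.\ on $\{k=c\}$ is exactly the paper's. The one step you execute differently is the production of $t_0$: the paper shows $k=0$ a.e.\ on $\{F\cdot ie^{iK}=0\}$ by differentiating $F\cdot n$ along that set and using $(F\cdot n)'=-f\cdot n-\kappa\, F\cdot\gamma'$, whereas you write $F\cdot ie^{iK}=-G\sin(K-\theta_0)$ and apply the density-point argument directly to $K$, exploiting that $K$ takes values in the discrete set $\theta_0+\pi\Z$ there. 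Both are instances of the same fact (the derivative of a Lipschitz function vanishes a.e.\ on its level sets), so the difference is one of bookkeeping rather than substance; your version has the minor advantage of making the conclusion $e^{iK}\parallel f_0$ completely explicit. One small point worth tightening: since $A$ is only in $L^{\infty}$, the identity $k(t_0)=A(t_0)K'(t_0)$ and the equality of the classical derivative $K'(t_0)$ with the a.e.-defined $A^{-1}k$ hold only for $t_0$ outside a further null set, so you should say you choose $t_0$ avoiding that null set as well (your set of candidates has positive measure, so this costs nothing); alternatively, conclude $k(t_0)=0$ directly from continuity of $k$ and $K(t)-K(t_0)=\int_{t_0}^{t}A^{-1}k\,\mathrm{d}s$.
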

\begin{proof}
We claim that 
\begin{equation}
\label{le-sw2}
k = 0\mbox{ almost everywhere on }\{F\cdot n = 0\}.
\end{equation}
In fact, almost everywhere on this set we have
$$
0 = (F\cdot n)' = - f\cdot n - \kappa F\cdot\gamma' = -\kappa F\cdot\gamma',
$$
where we have used that $f\cdot n = 0$ because $f \parallel F$.
However, by hypothesis $0 \neq F = (F\cdot\gamma')\gamma'$ because $F\cdot n = 0$.
Hence we conclude that indeed $\kappa = 0$ and thus $k=0$.
\\
To prove \eqref{bahn-ii},
note that $k = 0$
almost everywhere on $\{F\cdot ie^{iK} = 0\}$, by \eqref{le-sw2}. In particular
there exists a point $t_0\in (0, 1)$
with $k(t_0) = 0$ and $F(t_0)\cdot ie^{iK(t_0)} = 0$, too. But then
clearly $K \equiv K(t_0)$ and $k \equiv 0$ is a solution of \eqref{en-3},
because the direction of $F$ is constant. By Lemma \ref{lem-el} this is the only solution.
\\
Finally, since $F(t_0)\perp ie^{iK(t_0)}$, we know that $e^{iK}\parallel f_0$.
\\
To prove \eqref{bahn-i}, 
let $c\in\R$ be such that the set $\{k = c\}$ has positive length.
As $k' = 0$ almost everywhere on $\{k = c\}$, by \eqref{el-int} we then have $F\cdot n = 0$ 
on a set of positive length. Hence part \eqref{bahn-ii} implies
that $K$ is constant with $e^{iK}$ parallel to $f_0$.
\end{proof}

\begin{corollary}\label{corbigle}
Under the hypotheses of Lemma \ref{bigle} and assuming, in addition,
that $f_0$ is not parallel to $e^{iK(0)}$, we have $k'\neq 0$ almost everywhere.
In particular, the set $\{k = c\}$ has length zero for every $c\in\R$.
\end{corollary}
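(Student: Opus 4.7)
The plan is to derive both conclusions from Lemma \ref{bigle} by contradiction, using the ODE $k' = -F\cdot ie^{iK}$ (cf.\ \eqref{en-8} which is applicable thanks to Lemma \ref{lem-el}) to translate statements about $k'$ or $k$ into statements about the zero set of $F\cdot n$.

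First I would address the claim that $k'\neq 0$ almost everywhere. Suppose, for contradiction, that the set $\{k'=0\}$ has positive length. Since $k'=-F\cdot ie^{iK}$ almost everywhere, this means that $\{F\cdot ie^{iK}=0\}=\{F\cdot n=0\}$ has positive length. By part (\ref{bahn-ii}) of Lemma \ref{bigle} this forces $K$ to be constant on $I$ with $e^{iK}\parallel f_0$. In particular $e^{iK(0)}\parallel f_0$, which contradicts the additional hypothesis of the corollary.

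The second assertion then follows immediately: if the level set $\{k=c\}$ had positive length for some $c\in\R$, then $k'=0$ almost everywhere on this set (as a constant function has a.e.\ vanishing weak derivative), contradicting the first assertion. Alternatively, one may invoke part (\ref{bahn-i}) of Lemma \ref{bigle} directly to reach the same contradiction with $e^{iK(0)}\parallel f_0$.

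I do not anticipate a real obstacle here; the corollary is essentially a repackaging of Lemma \ref{bigle} in the presence of the transversality hypothesis $f_0\not\parallel e^{iK(0)}$. The only point one must be slightly careful about is to use the pointwise a.e.\ form of the state equation from \eqref{en-8} rather than the distributional form, so that the equivalence $\{k'=0\}=\{F\cdot n=0\}$ (up to a null set) is legitimate.
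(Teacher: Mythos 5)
Your argument is correct and coincides with the paper's own proof: both derive a contradiction from Lemma \ref{bigle}\eqref{bahn-ii} by observing that $k'=0$ on a set of positive length forces $F\cdot ie^{iK}=0$ there, hence $e^{iK}\parallel f_0$, contradicting the transversality hypothesis at $t=0$; the second claim follows since $k'=0$ a.e.\ on any level set of $k$. The only cosmetic difference is that the paper reads off $F\cdot ie^{iK}=0$ a.e.\ on $\{k'=0\}$ directly from the distributional equation \eqref{el-int}, while you route it through the pointwise form in \eqref{en-8}; both are legitimate since $k\in C^1(\overline I)$.
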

\begin{proof}
By \eqref{el-int} we have $F\cdot ie^{iK} = 0$ almost everywhere on $\{k' = 0\}$.
So if $k' = 0$ on a set of positive length, then Lemma \ref{bigle} \eqref{bahn-ii} would 
imply that $K$ is a constant satisfying $e^{iK}\parallel f_0$, contradicting
the relation between $f_0$ and $K(0)$.
\end{proof}

\subsection{Properties of minimisers}\label{Stateq}

The proof of the next proposition is based on energy comparison arguments. 

\begin{proposition}\label{kbd} 
Let $\beta\in (-\pi, 0)$ and let $f\parallel e^{i\beta}$ with $f\cdot e^{i\beta} > 0$
almost everywhere. Let $A\in L^{\infty}(I; [a, b])$ and let $K$ be an absolute minimiser of $\FF$
among all $K\in X$. Then $K' \leq 0$ almost everywhere on $I$, and on $[0, 1)$ the function
$K$ takes values in $(\beta, 0]$.
\end{proposition}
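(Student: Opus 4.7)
The plan is to prove successively $\beta<K<\beta+\pi$ throughout $I$, then the upper bound $K\leq 0$, and finally monotonicity $K'\leq 0$. Writing $F=|F|e^{i\beta}$ (possible since $f\parallel e^{i\beta}$ with positive coefficient), the identification $\CC\simeq\R^2$ gives $F\cdot e^{iK}=|F|\cos(K-\beta)$ and $F\cdot ie^{iK}=-|F|\sin(K-\beta)$, so the state equation reads $k'=|F|\sin(K-\beta)$. The key algebraic fact is that for any $c\in\beta+\pi\Z$ the reflection $K\mapsto 2c-K$ is a symmetry of $\FF$: the elastic part is invariant since it depends only on $|K'|$, and parity together with $2\pi$-periodicity of cosine give $\cos((2c-K)-\beta)=\cos(K-\beta)$.

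I apply this symmetry twice, with $c=\beta$ and with $c=\beta+\pi$, to rule out that the minimizer $K$ ever attains either value on $[0,1)$. Fix such a $c$ and suppose $K(s^*)=c$ at some $s^*\in I$; since $K(0)=0\neq c$, necessarily $s^*>0$. Define $\tilde K=K$ on $[0,s^*]$ and $\tilde K=2c-K$ on $[s^*,1]$; continuity at $s^*$ follows from $K(s^*)=c$, so $\tilde K\in X$, and the symmetry yields $\FF(\tilde K)=\FF(K)$. Thus $\tilde K$ is also a minimizer, hence a weak solution of the state equation, and by Lemma~\ref{lem-el} $\tilde k:=A\tilde K'$ is $C^1(\overline I)$. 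Since $\tilde k=k$ on $[0,s^*]$ and $\tilde k=-k$ on $[s^*,1]$, continuity at $s^*$ forces $k(s^*)=0$. But the constant pair $(K,k)\equiv(c,0)$ also solves the state equation (because $F\parallel e^{i\beta}$ and $ie^{ic}\perp e^{i\beta}$), so the uniqueness clause of Lemma~\ref{lem-el} applied at $t_0=s^*$ gives $K\equiv c$, contradicting $K(0)=0$. Since $K(0)=0\in(\beta,\beta+\pi)$ (as $\beta\in(-\pi,0)$) and $K$ is continuous, I conclude $\beta<K(t)<\beta+\pi$ for every $t\in I$; in particular $K<\pi$ throughout.

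To upgrade the upper bound to $K\leq 0$, suppose $\{K>0\}$ has positive measure and let $(a,b)\subseteq I$ be a connected component, so $K(a)=K(b)=0$ (with the obvious interpretation at $a=0$ or $b=1$). Define $\tilde K=-K$ on $(a,b)$ and $\tilde K=K$ elsewhere; then $\tilde K\in X$, the elastic contributions agree, and a direct calculation using $\cos(K+\beta)-\cos(K-\beta)=-2\sin K\sin\beta$ gives $\FF(\tilde K)-\FF(K)=2\sin\beta\int_a^b|F|\sin K$. By the previous paragraph $K\in(0,\pi)$ on $(a,b)$, so $\sin K>0$; combined with $\sin\beta<0$ and $|F|>0$ on $[0,1)$, this is strictly negative, contradicting minimality. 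Hence $K\leq 0$ on $I$. Finally, on $[0,1)$ we have $K-\beta\in(0,-\beta]\subset(0,\pi)$, so $\sin(K-\beta)\geq 0$; integrating the state equation $k'=|F|\sin(K-\beta)$ backward from $k(1)=0$ yields $k\leq 0$, whence $K'=A^{-1}k\leq 0$ a.e. The main delicate step is the confinement $K<\pi$ established in paragraph two: a direct energy comparison alone would leave the sign of $\sin K$ in paragraph three ambiguous, and it is the reflection--uniqueness argument applied at the second symmetry centre $c=\beta+\pi$ that neatly sidesteps this potential winding issue.
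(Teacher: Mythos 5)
Your proof is correct, but it takes a genuinely different route from the paper's. The paper first establishes $K'\leq 0$ directly by an energy comparison with the monotone rearrangement $\t K(t)=-\int_0^t|K'|$ truncated at level $\beta$ (the strict decrease of the potential term is where the paper's ``the hypotheses on $f$ readily imply'' hides a small winding estimate, namely $\t K\leq K\leq -\t K$ so that $|K|\leq|\beta|<\pi$); the range confinement $K>\beta$ then follows by freezing $K$ at the value $\beta$ past the first touching time and invoking Lemma~\ref{bigle}, which in turn rests on the ODE uniqueness of Lemma~\ref{lem-el}. You instead prove the two-sided confinement $\beta<K<\beta+\pi$ first, via the reflection symmetry $K\mapsto 2c-K$ ($c\in\beta+\pi\Z$) applied past a hypothetical touching time: equality of energies makes the reflected curve another minimiser, continuity of the flux $\t k$ from Lemma~\ref{lem-el} forces $k(s^\ast)=0$, and the same uniqueness clause then collapses $K$ to the constant $c$, a contradiction. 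With the winding controlled, your sign-flip competitor on components of $\{K>0\}$ gives $K\leq 0$, and monotonicity is then read off by integrating $k'=|F|\sin(K-\beta)\geq 0$ backward from the natural boundary condition $k(1)=0$ rather than by a further comparison argument. Both proofs ultimately lean on the same uniqueness statement; yours trades the paper's single rearrangement competitor for two reflection arguments, and in exchange makes the potential winding issue fully explicit and obtains $K'\leq 0$ as a consequence of the Euler--Lagrange equation instead of as the first step. All the individual steps check out (in particular $F=|F|e^{i\beta}$ with $|F|>0$ on $[0,1)$, the trigonometric identities, the membership $\t K\in X$ of the glued competitors, and the applicability of Lemma~\ref{lem-el} to any minimiser), so I see no gap.
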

\begin{proof}
Let $t_1$ be the minimum over all $t$ such that
$
- \int_0^t |K'| = \beta;
$
if no such $t$ exists then set $t_1 = 1$. Define
$$
\t K = 
\begin{cases}
- \int_0^t |K'| &\mbox{ for }t\in [0, t_1]
\\
\beta &\mbox{ for }t > t_1.
\end{cases}
$$
Since
\begin{eqnarray*}
\F(\t K) - \F(K) &=&
- \frac{1}{2}\int_{t_1}^1 A(K'(t))^2 \d t+ \int_I F(t)\cdot \left(e^{iK(t)} - e^{i\t K(t)} \right) \d t \\
&\leq& \int_I F(t)\cdot \left(e^{iK(t)} - e^{i\t K(t)}\right) \d t,
\end{eqnarray*}
the hypotheses on $f$ readily imply that $\F(\t K)$ is strictly less than $\F(K)$
if $K' > 0$ on a set of positive length. Since $K$ is an absolute minimiser,
we therefore conclude that $K'\leq 0$ on $I$.
\\
We claim that $K > \beta$ on $[0, 1)$. In fact,
otherwise, by continuity and since $K' \leq 0$, there would be a point $t_1\in [0,1)$ at which
$e^{iK(t_1)} = e^{i\beta}$. Then by energy minimality we would have $K = K(t_1)$
on $[t_1, 1]$.
From this we obtain that $k = AK' = 0$ on $[t_1,1]$.
Hence, Lemma \ref{bigle} \eqref{bahn-i}
would imply that $K = K(t_1)$ everywhere on $I$, contradicting the boundary condition $K(0) = 0$.
\end{proof}

For simplicity, in what follows we assume that $f = e^{-i\pi/2} = -e_2$.
Motivated by Proposition \ref{kbd} we introduce the convex subset
$$
\t X = \left\{K\in X : K\in (-\frac{\pi}{2}, 0]\mbox{ on }[0, 1) \right\}.
$$
Observe that $K(1) = -\frac{\pi}{2}$ is not excluded, so this is
not an open condition.

\begin{proposition}\label{unique}
Let $A\in L^{\infty}(I; [a, b])$ and let $f = -e_2$. Then 
there exists at most one global minimizer of $\FF$ within $X$.
\end{proposition}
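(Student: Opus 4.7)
The plan is to exhibit $\FF$ as a strictly convex functional on the convex set $\widetilde X$, which contains every global minimizer by Proposition \ref{kbd}; uniqueness then follows immediately.

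First I would compute the potential term explicitly. With $f = -e_2$ one has $F(t) = -(1-t)e_2$, so
$$
-\int_I F(t)\cdot e^{iK(t)}\,\d t \;=\; \int_I (1-t)\sin K(t)\,\d t,
$$
and therefore
$$
\FF(K) \;=\; \frac{1}{2}\int_I A(t)\,(K'(t))^2\,\d t \;+\; \int_I (1-t)\sin K(t)\,\d t.
$$
The set $\widetilde X$ is convex, since the interval $(-\pi/2, 0]$ in which $\widetilde X$ takes values is convex, and by Proposition \ref{kbd} any global minimizer of $\FF$ on $X$ lies in $\widetilde X$.

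Next I would verify strict convexity of $\FF$ restricted to $\widetilde X$. On the interval $[-\pi/2, 0]$ the second derivative of $\sin$ equals $-\sin$, which is nonnegative, so $\sin$ is convex on this interval. Combined with $1-t \geq 0$ on $I$, the potential term $K \mapsto \int_I (1-t)\sin K(t)\,\d t$ is convex on $\widetilde X$. The elastic term $K \mapsto \frac{1}{2}\int_I A(K')^2\,\d t$ is strictly convex in $K'$ because $A \geq a > 0$: for any two distinct elements $K_1, K_2 \in X$ sharing the same value $K(0)=0$, the difference forces $K_1' \neq K_2'$ on a set of positive measure, and on such a set the integrand is strictly convex in the derivative. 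Hence $\FF$ itself is strictly convex on $\widetilde X$.

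Finally, suppose $K_1, K_2 \in X$ are two global minimizers with $K_1 \neq K_2$. By Proposition \ref{kbd} both lie in $\widetilde X$, so their midpoint $\tfrac{1}{2}(K_1+K_2)$ also lies in $\widetilde X \subset X$. Strict convexity then yields
$$
\FF\!\left(\tfrac{K_1+K_2}{2}\right) \;<\; \tfrac{1}{2}\FF(K_1) + \tfrac{1}{2}\FF(K_2) \;=\; \min_X \FF,
$$
a contradiction. The only step requiring any care is confirming that the convexity of $\sin$ really holds throughout the range of admissible $K$; this is where Proposition \ref{kbd} is essential, since on all of $X$ (without the $a\,priori$ bound $K \in (-\pi/2, 0]$) the potential term is not globally convex and the argument would break down.
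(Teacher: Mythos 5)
Your proof is correct and follows essentially the same route as the paper: both restrict to the convex set $\t X$ via Proposition \ref{kbd}, use convexity of $z\mapsto(1-t)\sin z$ on $[-\frac{\pi}{2},0]$ together with strict convexity of the quadratic elastic term, and conclude by evaluating $\FF$ at the midpoint of two putative minimizers. The only (harmless) stylistic difference is that you derive strictness purely from the elastic term combined with $K(0)=0$, whereas the paper records strictness of the pointwise inequality for the full density $W(t,z,p)$.
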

\begin{proof}
The claim in fact is a direct consequence of Proposition \ref{kbd} and convexity
of the energy density. We include the details for the reader's convenience.
The energy density
\begin{equation}
\label{unique-0} 
W(t, z, p) = \frac{1}{2}A(t)p^2 + (1 - t)\sin z.
\end{equation} 
satisfies
\begin{equation}
\label{unique-1}
W\left( t, \frac{p + \t p}{2}, \frac{z + \t z}{2} \right) 
\leq \frac{1}{2}W\left( t, p, z\right) + \frac{1}{2}W\left( t, \t p,\t z\right)
\end{equation} 
whenever $t\in I$ and $p, \t p\in \R$ and $z, \t z\in [-\frac{\pi}{2}, 0]$, and the inequality
in \eqref{unique-1} is strict unless $(p, z) = (\t p, \t z)$. These facts follow from the convexity
of the sine function on the intervals in question.
\\
If $K$, $\t K\in X$ are minimizers of $\FF$ within $X$, then by Proposition \ref{kbd}
we have $K$, $\t K\in \t X$.
Set $\h K = \frac{1}{2}\left( K + \t K \right)$. By \eqref{unique-1} we have
\begin{align*}
\min_X \FF &\leq \FF(\h K) = \int_I W\left(t, \frac{K(t) + \t K(t)}{2}, \frac{K'(t) + \t K'(t)}{2}\right) \d t
\\
&\leq \int_I \left( \frac{1}{2}W(t, K(t), K'(t)) + \frac{1}{2}W(t, \t K(t),\t K'(t)) \right) \d t
\leq \min_X \FF.
\end{align*}
Hence we have equality throughout. Again by \eqref{unique-1} this implies
\begin{equation}
\label{unique-2} 
W\left(\cdot, \frac{K + \t K}{2}, \frac{K' + \t K'}{2}\right)
=  \frac{1}{2}W(\cdot, K, K') + \frac{1}{2}W(\cdot, \t K,\t K')\mbox{ a.e. on }I,
\end{equation} 
so $K = \t K$ almost everywhere.
\end{proof}

\begin{proposition}
\label{eindeu}
If $f = -e_2$ and $K\in\t X$ satisfies $(AK')' = (1 - t)\cos K$ in $X'$ then $K$ is the (unique)
minimiser of $\FF$ on $X$.
\end{proposition}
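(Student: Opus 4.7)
The plan is to exploit convexity of the energy density on the convex set $\t X$ in which $K$ is assumed to lie: I would show directly that the first-order condition coming from $(AK')' = (1-t)\cos K$ forces $K$ to minimise $\FF$ over $\t X$, and then upgrade this to global minimality and uniqueness on $X$ via Propositions~\ref{kbd} and~\ref{unique}.

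First I would unpack the functional. With $f = -e_2$ one has $F(t) = -(1-t)e_2$ and $F(t)\cdot e^{iK(t)} = -(1-t)\sin K(t)$, so
$$\FF(K) = \int_I W(t,K(t),K'(t))\ \d t,\qquad W(t,z,p) = \tfrac{1}{2}A(t)p^2 + (1-t)\sin z,$$
as in \eqref{unique-0}. Since $1 - t \ge 0$ on $I$ and $\sin''z = -\sin z \ge 0$ on $[-\pi/2,0]$, while $A(t)\ge a > 0$, the density $W(t,\cdot,\cdot)$ is convex on $[-\pi/2,0]\times\R$, with $\partial_z W = (1-t)\cos z$ and $\partial_p W = A p$. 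For any competitor $\t K \in \t X$, both $K$ and $\t K$ take values in $[-\pi/2,0]$, so the convex subgradient inequality gives, a.e.\ on $I$,
$$W(t,\t K,\t K') \ge W(t,K,K') + (1-t)\cos K\,(\t K - K) + AK'(\t K' - K').$$

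Next I would invoke Lemma~\ref{lem-el}, which identifies the hypothesis $(AK')' = (1-t)\cos K$ in $X'$ with the three conditions: $k := AK' \in C^1(\overline I)$, $k' = (1-t)\cos K$ a.e.\ on $I$, and the natural boundary condition $k(1) = 0$. Since $(\t K - K)(0) = 0$ and $k(1) = 0$, integration by parts gives
$$\int_I AK'(\t K' - K')\ \d t = -\int_I (1-t)\cos K\,(\t K - K)\ \d t,$$
with no surviving boundary term. Integrating the pointwise convexity inequality over $I$ and substituting this identity shows that its right-hand side vanishes, yielding $\FF(\t K)\ge\FF(K)$ for every $\t K\in\t X$. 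By Proposition~\ref{kbd}, any global minimiser of $\FF$ on $X$ already lies in $\t X$, so $\inf_X \FF = \inf_{\t X}\FF = \FF(K)$, i.e., $K$ is a global minimiser on $X$; uniqueness is then Proposition~\ref{unique}.

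The only mildly delicate point I foresee is the vanishing of the boundary contribution in the integration by parts. Without the natural condition $k(1) = 0$, a stray term $k(1)(\t K(1) - K(1))$ would survive (in general $\t K(1) \neq K(1)$), and the first-order optimality certificate would collapse. So the main obstacle is really just recognising that the weak equation in $X'$, as decoded by Lemma~\ref{lem-el}, already encodes this natural condition, making the rest of the argument entirely routine.
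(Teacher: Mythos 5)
Your proof is correct and follows essentially the same route as the paper: reduce to minimality within $\t X$ via Proposition~\ref{kbd}, apply the convex subgradient inequality for $W$ on $[-\tfrac{\pi}{2},0]\times\R$, integrate, and use the weak equation to kill the first-order term. The only difference is that you spell out the integration by parts and the role of the natural boundary condition $k(1)=0$ (equivalently, one can test the weak formulation directly with $\t K - K\in X$), which the paper compresses into ``then we integrate and use the equation satisfied by $K$.''
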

\begin{proof}
Let $K\in\t X$ be as in the hypothesis.
By Proposition \ref{kbd} it is enough to show that $K$ is minimizing within $\t X$.
By convexity of $(z, p)\mapsto W(t, z, p)$ we have
$$
W(t, \t z, \t p) \geq W(t, z, p) + (\d_z W)(t, z, p) (\t z - z) +
(\d_p W)(t, z, p)(\t p - p)
$$
whenever $p$, $\t p\in\R$ and $z$, $\t z\in [-\frac{\pi}{2}, 0]$.
If $\t K\in \t X$, then we may insert
$(z, p) = (K, K')$ and $(\t z, \t p) = (\t K, \t K')$. Then we integrate
and use the equation satisfied by $K$ to find that indeed $\FF(\t K)\geq \FF(K)$.
\end{proof}

\section{Relaxation by the homogenization method}

For $\theta\in [0, 1]$ define
\begin{equation}
\label{Atheta}
A\left( \theta \right) = \left( \frac{1-\theta}{a} + \frac{\theta}{b}\right)^{-1}.
\end{equation}
If $\theta = \chi$ only takes values in $\{0, 1\}$, then
$$
A(\chi) = (1 - \chi)a + \chi b.
$$
The coefficient \eqref{Atheta} will arise naturally for the usual reason:
if $\chi_n\in L^{\infty}(I; \{0, 1\})$ converge weakly-$*$ in $L^{\infty}(I)$ 
to $\theta$, then
\begin{equation}
\label{rema-1}
\left( (1 - \chi_n)a + \chi_n b \right)^{-1} = \left( A(\chi_n) \right)^{-1} =  (1 - \chi_n)\frac1a + \chi_n \frac1b  \weaks \left( A(\theta) \right)^{-1}
\end{equation}
in $L^{\infty}(I)$.
We define the compliance $J : X\times L^{\infty}(I; [0, 1])\to\R$ as follows:
$$
J\left( K, \theta \right) = \int_I F(t)\cdot e^{iK(t)} \d t + c_l\int_I\theta(t) \d t.
$$
The constant $c_l$ is strictly positive, so the second term penalises the use of the hard material.
\\
The optimal design $\chi$ should minimise $J(K, \chi)$, under the constraint
that $K$ be a solution to
\eqref{en-3c} with $A = A(\chi)$,
among all $\chi\in L^{\infty}(I; \{0, 1\})$. 
Following the work \cite{Allaire}  in the context of linearised elasticity, we 
by deriving the corresponding relaxed problem and obtain the  following result:
\begin{proposition}\label{pro1} 
Let $\theta_n\in L^{\infty}(I; [0, 1])$ and let $\theta\in L^{\infty}(I; [0, 1])$ be such that
$\theta_n\weaks\theta$ weakly-$*$ in $L^{\infty}(I)$ as $n\to\infty$. Let 
$K_n\in X$ be a solution of \eqref{en-3c} with $A = A(\theta_n)$.
Then, after passing to a subsequence, $K_n$ converge weakly in $W^{1,2}(I)$
to a solution $K\in X$ of \eqref{en-3c} with $A = A(\theta)$.
\end{proposition}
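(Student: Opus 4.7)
The plan is to derive a uniform $W^{1,2}$-bound on $K_n$, extract a weakly convergent subsequence, and then pass to the limit in the state equation. The main obstacle will be the passage to the limit in the product $A(\theta_n)K_n'$, in which both factors converge only weakly.

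First, I would establish the a priori bound by an energy estimate. Since the weak formulation \eqref{en-3b} extends by density to test functions $\phi\in X$, and $K_n\in X$ is admissible, testing against $\phi = K_n$ yields
$$\int_I A(\theta_n)(K_n')^2 \d t = \int_I F\cdot ie^{iK_n}\,K_n \d t,$$
whose right-hand side is bounded by $\|F\|_{L^\infty}\|K_n\|_{L^1}$. Using $A(\theta_n)\geq a > 0$ together with the Poincaré inequality (available because $K_n(0)=0$), this yields a uniform bound on $\|K_n\|_{W^{1,2}}$. After extraction, $K_n\weak K$ in $W^{1,2}(I)$ and $K_n\to K$ uniformly on $\overline{I}$ by compact embedding; in particular $K(0)=0$, so $K\in X$, and $e^{iK_n}\to e^{iK}$ uniformly.

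The key step is the passage to the limit, for which I would follow the standard one-dimensional homogenisation trick and work with the dual variable $k_n := A(\theta_n)K_n'$. By Lemma \ref{lem-el}, $k_n$ satisfies $k_n' = -ie^{iK_n}\cdot F$ pointwise together with the natural boundary condition $k_n(1)=0$, hence
$$k_n(t) = \int_t^1 ie^{iK_n(s)}\cdot F(s) \d s.$$
Uniform convergence of $K_n$ forces the right-hand side to converge uniformly on $\overline{I}$ to $k(t):=\int_t^1 ie^{iK(s)}\cdot F(s) \d s$, so $k_n\to k$ uniformly, and by construction $k'=-ie^{iK}\cdot F$ with $k(1)=0$.

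It remains to identify $k$ with $A(\theta)K'$. Rewriting $K_n' = A(\theta_n)^{-1}k_n$ and combining the weak-$*$ convergence $A(\theta_n)^{-1}\weaks A(\theta)^{-1}$ in $L^\infty(I)$ from \eqref{rema-1} with the strong convergence $k_n\to k$ in $L^\infty$, the standard fact that a weak-$*$ limit times a strong limit is the weak limit of the product gives $K_n'\weak A(\theta)^{-1}k$ in $L^2(I)$. But the $W^{1,2}$-bound already produced $K_n'\weak K'$ in $L^2$, so uniqueness of the weak limit forces $K' = A(\theta)^{-1}k$, that is $k = A(\theta)K'$. Inserting this into $k'=-ie^{iK}\cdot F$ yields the distributional identity $(A(\theta)K')' = -ie^{iK}\cdot F$ in $X'$, as required. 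The delicate point is precisely this weak-$*$/strong identification; it succeeds because the one-dimensional setting, together with the natural boundary condition $k_n(1)=0$, allows us to reconstruct $k_n$ from the strongly convergent right-hand side by quadrature, thereby upgrading one of the two factors in the offending product from weak to strong convergence.
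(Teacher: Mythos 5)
Your proof is correct and follows essentially the same route as the paper: the same energy estimate via testing with $K_n$ and Poincar\'e, and the same key step of writing $K_n' = A(\theta_n)^{-1}\int_t^1 F\cdot ie^{iK_n}\,\mathrm{d}s$ (your $k_n$ by quadrature from the natural boundary condition) so that the product pairs a weak-$*$ convergent factor with a uniformly convergent one. The only cosmetic difference is that you make the dual variable $k_n$ explicit, which the paper leaves implicit.
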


Proposition \ref{pro1} is a consequence of the fact that under its hypotheses we have
$\left( A(\theta_n) \right)^{-1}\weaks \left( A(\theta) \right)^{-1}$ 
weakly-$*$ in $L^{\infty}$, and of the
following lemma.

\begin{lemma}\label{le1} 
Let $A_n\in L^{\infty}((0, 1), [a, b])$ and let 
$K_n\in X$ be a solution of \eqref{en-3c} with $A = A_n$, 
and suppose that there is $B\in L^{\infty}(I)$ such that 
$A_n^{-1}\weaks B$ weakly-$*$ in $L^{\infty}(I)$. 
Then there exists $K\in X$ such that, after passing to subsequences,
$K_n\weak K$ in $W^{1,2}(I)$. Moreover, $K$ solves \eqref{en-3c} with $A = B^{-1}$.
\end{lemma}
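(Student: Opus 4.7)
The plan is to obtain uniform a priori bounds on $K_n$, extract convergent subsequences, and identify the limit via the variable $k_n := A_n K_n'$, whose strong convergence compensates for the only weak convergence of $A_n^{-1}$.

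\textbf{Step 1 (a priori bounds).} By Lemma \ref{lem-el}, each $K_n$ satisfies $k_n\in C^1(\overline I)$ with $k_n(1)=0$ and $k_n' = -ie^{iK_n}\cdot F$ almost everywhere. Since $|ie^{iK_n}|=1$ and $F\in W^{1,2}(I)\embed L^\infty(I)$ is fixed, $\|k_n'\|_{L^\infty}\leq\|F\|_{L^\infty}$, and integrating from $t$ to $1$ gives a uniform bound on $\|k_n\|_{L^\infty}$. Because $A_n\geq a>0$, we have $|K_n'|=A_n^{-1}|k_n|\leq\|k_n\|_{L^\infty}/a$, and together with $K_n(0)=0$ this bounds $\{K_n\}$ uniformly in $W^{1,\infty}(I)$.

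\textbf{Step 2 (compactness).} By Arzelà--Ascoli, along a subsequence (not relabeled) $K_n\to K$ uniformly on $\overline I$ and $K_n\weak K$ in $W^{1,2}(I)$, with $K(0)=0$, i.e. $K\in X$. Since $\{k_n\}$ is bounded in $W^{1,\infty}(I)$, a further subsequence yields $k_n\to k$ uniformly for some $k\in C(\overline I)$.

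\textbf{Step 3 (passing to the limit in the pointwise ODE).} Uniform convergence $K_n\to K$ and continuity of $F$ give $ie^{iK_n}\cdot F\to ie^{iK}\cdot F$ uniformly. Hence $k_n'\to -ie^{iK}\cdot F$ uniformly, so $k\in C^1(\overline I)$ with $k'=-ie^{iK}\cdot F$ and $k(1)=\lim_n k_n(1)=0$.

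\textbf{Step 4 (identifying $k = B^{-1}K'$).} This is the heart of the proof and the one genuine obstacle: one cannot pass directly to the limit in the product $A_n K_n'$ (both factors only converge weakly). Instead, we write $K_n' = A_n^{-1}k_n$. Here $A_n^{-1}\weaks B$ in $L^\infty$-weak-$*$ by hypothesis, while $k_n\to k$ \emph{strongly} by Step 2. The product of a weakly-$*$ convergent and a strongly convergent sequence in $L^\infty$ passes to the limit (e.g. testing against $C_0^\infty$ functions), so $A_n^{-1}k_n\weaks Bk$. On the other hand $A_n^{-1}k_n=K_n'\weak K'$ in $L^2$; by uniqueness of the distributional limit $K'=Bk$. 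Since $B\geq 1/b>0$, we obtain $k=B^{-1}K'$, and the equation $k'=-ie^{iK}\cdot F$ from Step 3 reads $(B^{-1}K')'=-ie^{iK}\cdot F$ in $X'$, which is \eqref{en-3c} with $A=B^{-1}$. This completes the argument, with the compensation between weak and strong convergence in the product $A_n^{-1}k_n$ being the one nontrivial ingredient; the rest is standard compactness.
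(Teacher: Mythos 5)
Your proof is correct and follows essentially the same route as the paper: both arguments hinge on the strong (uniform) convergence of $k_n = A_nK_n'$ compensating the merely weak-$*$ convergence of $A_n^{-1}$ in the product $K_n' = A_n^{-1}k_n$. The only cosmetic differences are that you derive the a priori bound by integrating the pointwise ODE for $k_n$ from Lemma \ref{lem-el} (obtaining a $W^{1,\infty}$ bound) instead of testing the weak formulation with $K_n$, and you get the uniform convergence of $k_n$ via Arzel\`a--Ascoli rather than from the explicit formula $k_n(t)=\int_t^1 F\cdot ie^{iK_n}\,\mathrm{d}s$.
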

\begin{proof}
The state equation \eqref{en-3c} implies an a priori estimate for $K_n$:
in fact, testing \eqref{en-3c} with $K_{n}$ we have
\begin{align*}
a\int_I (K_n')^2 \d t&\leq \int_I A_n (K_n')^2 \d t = -\int_I K_n\ ie^{iK_n}\cdot F \d t
\leq \|F\|_{L^2}\|K_n\|_{L^2}.
\end{align*}
Since $K_n(0) = 0$ we have $\|K_n\|_{L^2}\leq \|K_n'\|_{L^2}$, so the above estimate implies
\begin{equation}
\label{}
\|K_n\|_{L^2} \leq \frac1a \|F\|_{L^2}.
\end{equation}
But then using the above chain of estimates again,
$$
\|K_n'\|_{L^2} \leq \frac1a  \|F\|_{L^2}.
$$
Hence, after taking subsequences, there is $K\in X$ such that $K_n\weak K$ in $W^{1,2}(I)$.
\\
Since $(A_nK_n')(1) = 0$ by Lemma \ref{lem-el}, we can write \eqref{en-3c} as 
$$
K_n' = A_n^{-1}\ \int_t^1 F\cdot ie^{iK_n} \d t.
$$
Since $K_n\to K$ uniformly, we have
$$
\int_t^1 F\cdot ie^{iK_n} \d t \to \int_t^1 F\cdot ie^{iK} \d t
$$
uniformly on $I$. Since $A_n^{-1}\weaks B$ in $L^{\infty}$, we deduce that $K$ satisfies
$$
K' = B\ \int_t^1 F\cdot ie^{iK} \d t.
$$
This is equivalent to \eqref{en-3c} with $A = B^{-1}$.
\end{proof}

Proposition \ref{pro1} can be viewed as a homogenization result
for the equilibrium equation of the nonlinear
bending energy functional \eqref{wgc}. Related (general) homogenization results 
for nonlinearly elastic rods can be found
in \cite{Velcic}, where the homogenization process is carried out on a variational
level (not on the equilibrium equation). The starting point in \cite{Velcic}
is the genuinely three-dimensional nonlinear elasticity functional for a rod of finite positive
thickness, and the homogenization limit is combined with the zero thickness limit.
\\

Now suppose that $f  = - e_2$.
Proposition \ref{eindeu} 
shows that for every $\theta\in L^{\infty}(I; [0, 1])$
there exists a unique solution $K\in \t X$
of \eqref{en-3c} with $A = A(\theta)$. 
Abusing notation we will denote this solution $K\in\t X$ by $K(\theta)$.
We define $\h J : L^{\infty}(I; [0, 1])\to\R$ by
$$
\h J(\theta) = J\left( K({\theta}), \theta \right).
$$

\begin{proposition}
Let $f = -e_2$. Then the infimum
\begin{equation}
\label{relax-2}
\inf_{\theta\in L^{\infty}(I; [0, 1])}\h J(\theta)
\end{equation}
is attained and agrees with
\begin{equation}
\label{relax-1}
\inf_{\chi\in L^{\infty}(I; \{0, 1\})} \h J(\chi).
\end{equation}
\end{proposition}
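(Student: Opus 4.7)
The plan is to show that $\h J$ is weakly-$*$ continuous on $L^{\infty}(I; [0, 1])$, and then combine this with weak-$*$ compactness and a standard density argument.

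First I would establish that $\theta\mapsto \h J(\theta)$ is sequentially weakly-$*$ continuous. Suppose $\theta_n\weaks\theta$ in $L^{\infty}(I)$. By Proposition \ref{pro1}, any subsequence of $K(\theta_n)$ admits a further subsequence weakly converging in $W^{1,2}(I)$ to some solution $\t K\in X$ of \eqref{en-3c} with $A = A(\theta)$. Since $f = -e_2$, Propositions \ref{kbd}, \ref{unique} and \ref{eindeu} together identify the unique minimiser in $\t X$ as the only such solution, so $\t K = K(\theta)$. Because every subsequence yields the same limit, the entire sequence satisfies $K(\theta_n)\weak K(\theta)$ in $W^{1,2}(I)$, and by the Sobolev embedding $W^{1,2}(I)\embed C(\overline I)$ the convergence is uniform. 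Consequently
\[
\int_I F(t)\cdot e^{iK(\theta_n)(t)}\ \d t \longrightarrow \int_I F(t)\cdot e^{iK(\theta)(t)}\ \d t,
\]
and the weak-$*$ convergence of $\theta_n$ gives $\int_I\theta_n\to\int_I\theta$. Hence $\h J(\theta_n)\to\h J(\theta)$.

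Next I would invoke compactness: $L^{\infty}(I; [0, 1])$ is convex, bounded and weakly-$*$ closed in $L^{\infty}(I)$, so by Banach--Alaoglu it is weakly-$*$ compact and metrisable on bounded sets. The continuous functional $\h J$ therefore attains its minimum on it, which proves that \eqref{relax-2} is attained.

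For the equality of \eqref{relax-2} and \eqref{relax-1}, one inequality is immediate because $L^{\infty}(I;\{0,1\})\subset L^{\infty}(I;[0,1])$. For the reverse, given any $\theta\in L^{\infty}(I;[0, 1])$, I would use the standard construction of fine-scale oscillating characteristic functions $\chi_n\in L^{\infty}(I; \{0, 1\})$ with $\chi_n\weaks\theta$ in $L^{\infty}(I)$ (e.g.\ a periodic arrangement with local volume fraction $\theta$). By the continuity established above, $\h J(\chi_n)\to\h J(\theta)$, so $\inf_{\chi}\h J(\chi)\leq\h J(\theta)$; taking the infimum over $\theta$ yields the opposite inequality and hence equality.

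The main obstacle is ensuring that the \emph{whole} sequence $K(\theta_n)$, rather than merely a subsequence, converges to $K(\theta)$, since $\h J$ depends on $\theta$ through the nonlinear map $\theta\mapsto K(\theta)$. This rests essentially on the uniqueness of minimisers in $\t X$ established in the preceding section under the assumption $f = -e_2$; without it, $\h J$ would be set-valued and the continuity step would fail.
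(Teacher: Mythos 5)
Your overall strategy --- weak-$*$ sequential continuity of $\h J$ via Proposition \ref{pro1} plus uniqueness, then compactness for attainment and oscillating characteristic functions for the equality of the two infima --- is exactly the paper's argument. There is, however, one step whose justification as written does not hold: the identification of the subsequential limit $\t K$ with $K(\theta)$. You claim that Propositions \ref{kbd}, \ref{unique} and \ref{eindeu} identify the minimiser as ``the only such solution'' of \eqref{en-3c} with $A = A(\theta)$. That is not what these results say: Proposition \ref{unique} gives uniqueness of \emph{global minimisers}, and Proposition \ref{eindeu} gives uniqueness of solutions of the state equation \emph{within $\t X$}. Neither excludes solutions of \eqref{en-3c} lying in $X\setminus\t X$, and such non-minimising equilibria do exist in general (the twisted and S-shaped configurations of Section \ref{sec:numericsStateEquation} are precisely of this kind). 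So from ``$\t K$ solves the state equation'' alone you cannot conclude $\t K = K(\theta)$.

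The missing ingredient, which is the paper's one-line fix, is that the limit actually lies in $\t X$: each $K(\theta_n)$ belongs to the convex set $\t X$ by Proposition \ref{kbd}, and this property survives the weak $W^{1,2}$ (hence uniform) convergence --- the limit is nonincreasing with values in $[-\frac{\pi}{2},0]$ on $[0,1)$, and it cannot attain the value $-\frac{\pi}{2}$ before $t=1$, since otherwise $k$ would vanish on a set of positive length and Lemma \ref{bigle} would force $K$ to be constant, contradicting $K(0)=0$. Once $\t K\in\t X$ is known, Proposition \ref{eindeu} applies and yields $\t K = K(\theta)$; the remainder of your argument (the subsequence principle, uniform convergence of $e^{iK(\theta_n)}$, Banach--Alaoglu, and the approximation of $\theta$ by characteristic functions) then goes through as written and coincides with the paper's proof.
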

\begin{proof}
In order to see that \eqref{relax-2} is attained, let
$\theta_n\in L^{\infty}(I; [0, 1])$ be such that $\h J(\theta_n)$
converges to \eqref{relax-2}. After taking subsequences (not relabelled), we may assume that
$\theta_n\weaks \theta$ in $L^{\infty}(I)$.
Hence by Proposition \ref{pro1} we know that taking another subsequence $K_n = K(\theta_n)$ converge weakly in $W^{1,2}(I)$
to a solution $K\in X$ of \eqref{en-3c} (with $A = A(\theta)$).
By convexity of $\t X$, we know that $K\in\t X$. Hence $K = K(\theta)$ by Proposition \ref{eindeu}.
Hence
$
\h J(\theta_n) \to \h J(\theta).
$
\\
In order to prove that \eqref{relax-1} does not exceed \eqref{relax-2} (the other estimate is trivial),
let $\theta$ minimise $\h J$ among all functions in $L^{\infty}(I; [0, 1])$.
Let $\chi_n\in L^{\infty}(I; \{0, 1\})$ be such that $\chi_n\weaks\theta$.
Then as before we see that
$K(\chi_n)$ subconverge to $K(\theta)$ weakly in $W^{1,2}$, and therefore $\h J(\chi_n)\to\h J(\theta)$. 
\end{proof}

\section{Optimal design}\label{sec:optimalDesign}
Now, the natural question is whether microstructure actually occurs, i.e., whether
the minimum in \eqref{relax-1} is attained or not. Throughout this section we continue to assume $f = -e_2$.
Following the abstract approach in \cite{HPUU} we introduce the operator
$
\solutionOperatorStateEquation : X\times L^{\infty} \to X' 
$
by setting
$$
\solutionOperatorStateEquation (K, \theta) = (A(\theta)K')' - (1 - t)\cos K.
$$
The optimal design is a function $\theta : I\to [0, 1]$ 
minimising $J(K, \theta)$ subject to the constraint that $K$ 
be the minimiser of the elastic energy $\FF$ with $A = A(\theta)$.
This constraint on $K$ is equivalent to the requirement that $K\in\t X$
be a solution of \eqref{stateq}, i.e., of 
$\solutionOperatorStateEquation (K, \theta) = 0$ in $X'$.
\\
In fact, by the results of Section \ref{Stateq} we know that for given $\theta\in L^{\infty}(I; [0, 1])$
there exists a unique solution $K \in \t X$ of the state equation
\begin{equation}
\label{stateq}
(A(\theta)K')' = (1 - t)\cos K,
\end{equation}
and this $K$ is the unique absolute minimiser of the functional $\FF$ with $A = A(\theta)$. As before,
we denote this minimiser by $K(\theta)$.

\begin{lemma}\label{KC1}
For $\e > 0$ small enough (depending on $a$ and $b$), the map $K : L^{\infty}(I; (-\e, 1 + \e))\to W^{1,2}(I)$ taking
$\theta$ into $K(\theta)$ is continuously Fr\'echet differentiable.
\end{lemma}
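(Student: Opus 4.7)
The plan is to invoke the implicit function theorem in Banach spaces for the map
\[
\solutionOperatorStateEquation : X\times L^{\infty}(I;(-\e,1+\e))\to X',\qquad \solutionOperatorStateEquation(K,\theta) = (A(\theta)K')' - (1-t)\cos K,
\]
choosing $\e>0$ small enough that the denominator in \eqref{Atheta} stays bounded away from zero on $(-\e,1+\e)$. Propositions \ref{kbd} and \ref{eindeu} extend verbatim to any $A\in L^{\infty}(I)$ bounded below by a positive constant, so for every such $\theta$ the equation $\solutionOperatorStateEquation(K,\theta)=0$ still has a unique solution $K(\theta)\in\t X$. It suffices to verify the hypotheses of the implicit function theorem near an arbitrary reference point $\theta_0\in L^{\infty}(I;(-\e,1+\e))$.

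First I would check that $\solutionOperatorStateEquation$ is $C^1$ (in fact $C^{\infty}$). The scalar function $\theta\mapsto((1-\theta)/a+\theta/b)^{-1}$ is smooth on $(-\e,1+\e)$, so the corresponding Nemytskii operator $\theta\mapsto A(\theta)$ is $C^{\infty}$ from $L^{\infty}(I;(-\e,1+\e))$ into $L^{\infty}(I)$ (here one uses the $L^{\infty}$ setting together with uniform continuity of the derivatives of the scalar nonlinearity on bounded sets). The map $(A,K)\mapsto (AK')'$ is continuous bilinear from $L^{\infty}(I)\times X$ into $X'$, and the map $K\mapsto\cos K$ is smooth from $X\hookrightarrow C^0(\overline I)$ into $L^{\infty}(I)\hookrightarrow X'$. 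Combining these gives the desired smoothness of $\solutionOperatorStateEquation$.

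Next I would show that the partial derivative
\[
\partial_K\solutionOperatorStateEquation(K(\theta_0),\theta_0)[\delta K] = (A(\theta_0)\,\delta K')' + (1-t)\sin(K(\theta_0))\,\delta K
\]
is an isomorphism from $X$ onto $X'$. Its associated bilinear form on $X\times X$ is
\[
B(\delta K,\varphi) = \int_I A(\theta_0)\,\delta K'\,\varphi'\d t - \int_I (1-t)\sin(K(\theta_0))\,\delta K\,\varphi\d t.
\]
By Proposition \ref{kbd} we have $K(\theta_0)\in(-\pi/2,0]$ on $[0,1)$, so $\sin K(\theta_0)\le 0$ and the zero-order term contributes a nonnegative quadratic form. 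Since $A(\theta_0)$ is bounded below by a positive constant and $\delta K(0)=0$, Poincar\'e's inequality yields coercivity $B(\delta K,\delta K)\ge c\,\|\delta K\|_{W^{1,2}(I)}^2$; boundedness of $B$ is immediate, and Lax--Milgram provides the isomorphism.

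The implicit function theorem then furnishes a $C^1$ map $\theta\mapsto\t K(\theta)\in X$ defined near $\theta_0$ with $\solutionOperatorStateEquation(\t K(\theta),\theta)=0$. Since $\t X$ is a closed convex set, continuity of $\theta\mapsto\t K(\theta)$ ensures $\t K(\theta)\in\t X$ in a neighborhood of $\theta_0$, so uniqueness (Proposition \ref{eindeu}) forces $\t K(\theta)=K(\theta)$ locally, and the local $C^1$ pieces glue to the claimed global $C^1$ map. I expect the main technical nuisance to be justifying the smoothness of the Nemytskii operator $\theta\mapsto A(\theta)$ together with the extension of the existence/uniqueness theory from $\theta\in[0,1]$ to $\theta\in(-\e,1+\e)$; once that is in place, the sign property $\sin K(\theta_0)\le 0$ supplied by Proposition \ref{kbd} makes invertibility of $\partial_K\solutionOperatorStateEquation$ essentially automatic.
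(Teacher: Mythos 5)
Your proposal is correct and follows essentially the same route as the paper: apply the implicit function theorem to $\solutionOperatorStateEquation$, using that $\sin K(\theta_0)\le 0$ for $K(\theta_0)\in\t X$ to get bijectivity of $D_1\solutionOperatorStateEquation(K(\theta_0),\theta_0):X\to X'$ (the paper states this in two lines; you supply the Lax--Milgram/coercivity details, the Nemytskii smoothness, and the identification of the local IFT branch with $K(\theta)$, all of which are sound).
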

\begin{proof}
It is easy to verify that $\solutionOperatorStateEquation  : X\times L^{\infty}\to X'$ is continuously Fr\'echet differentiable.
Its partial Fr\'echet derivative $D_1 \solutionOperatorStateEquation (K, \theta)$ with respect to $K$ is the operator taking $\eta\in X$ into
\begin{equation}
\label{linle-1}
D_1 \solutionOperatorStateEquation (K, \theta)(\eta) = \left( A(\theta)\eta' \right)' + (1 - t)(\sin K)\eta.
\end{equation}
For $K\in\t X$ the linear operator $D_1 \solutionOperatorStateEquation (K, \theta) : X\to X'$ is easily seen to be bijective,
because $\sin K$ is nonpositive for $K\in\t X$.
Hence the claim follows from the implicit function theorem.
\end{proof}

Later on we will need the dual operator of $D_2 \solutionOperatorStateEquation (K, \theta)$. 
Clearly
$
D_1J(K, \theta) = F\cdot ie^{iK} = - (1 - t)\cos K
$
and
$
D_2J(K, \theta) = c_l.
$
The partial derivative $D_2 \solutionOperatorStateEquation (K, \theta) : L^{\infty}(I; (-\e, 1 + \e))\to X'$
is the linear map given by
$$
D_2\solutionOperatorStateEquation (K, \theta)(\eta) = \left( \dot A(\theta)\ \eta\ K' \right)'.
$$
Here 
\begin{equation}
\label{Adot}
\dot A(\theta) = \left( \frac{1}{a} - \frac{1}{b} \right)A^2(\theta),
\end{equation}
where $A(\theta)$ is as in \eqref{Atheta}.
Using this, we see that the dual operator $D_2 \solutionOperatorStateEquation (K, \theta)^* : X\to (L^{\infty})'$
to $D_2 \solutionOperatorStateEquation (K, \theta)$ is 
$$
D_2\solutionOperatorStateEquation (K, \theta)^* = \left( \left( \frac{1}{a} - \frac{1}{b} \right)A(\theta) k\right)',
$$
where $k=A(\theta) K'$ as in Lemma \ref{lem-el}.

\subsection{Equilibrium equation for the optimal design}

We will now derive the equilibrium equation satisfied by
$\h J$-minimising $\theta$.
Denoting the Fr\'echet derivative of $K$ 
with respect to $\theta$ by $DK$,
we compute (using Lemma \ref{KC1})
$$
D\h J(\theta)(\eta) = D_1J(K({\theta}), \theta)\left( DK(\theta)\eta \right) + 
D_2J(K(\theta), \theta)\eta
$$
for all $\eta\in L^{\infty}(I)$, that is,
$$
D\h J(\theta) = (DK(\theta))^* \left(D_1 J(K(\theta), \theta)\right) + D_2 J(K(\theta), \theta).
$$
In order to compute the first term on the right-hand side,
we differentiate the state equation $D_1 G(K(\theta),\theta) = 0$ with respect to $\theta$ and take adjoints to see that
$$
(DK({\theta}))^* = - D_2\solutionOperatorStateEquation \left(K(\theta), \theta)\right)^* \left((D_1 \solutionOperatorStateEquation (K(\theta), \theta))^{-1}\right)^*.
$$
Therefore, if $P\in X$ is the unique solution of
\begin{equation}
\label{adj} 
\left(D_1\solutionOperatorStateEquation (K({\theta}), \theta)\right)^*(P) = -D_1J(K({\theta}), \theta) \mbox{ in }X',
\end{equation} 
then
\begin{equation}
\label{J'}
D\h J(\theta) = D_2\solutionOperatorStateEquation (K({\theta}), \theta)^*P + D_2J(K({\theta}), \theta).
\end{equation}

By the computations above equation \eqref{J'} becomes
\begin{equation}
\label{J-1}
D\h J(\theta) = 
-\left( \frac{1}{a} - \frac{1}{b} \right) k p + c_l,
\end{equation}
where we introduced $p = A(\theta)P'$.
And the adjoint equation \eqref{adj} becomes 
\begin{equation}
\label{J-3} 
\left( A(\theta)P' \right)' = p' = (1 - t)\left( \cos K - P \sin K \right).
\end{equation}

The equilibrium equation satisfied by designs $\theta$ minimising $\h J$ asserts that
$$D\h J(\theta)(\eta) \geq 0$$
 for all $\eta\in L^{\infty}(I)$ satisfying
$\eta \geq 0$ almost everywhere on the set $\{\theta = 0\}$ and
$\eta \leq 0$ on $\{\theta = 1\}$.
\\
This leads to the following pointwise condition:
$$
\left( \frac{1}{a} - \frac{1}{b} \right) k p - c_l
\begin{cases}
\leq 0  &\mbox{ on }\{\theta = 0\}
\\
\geq 0  &\mbox{ on }\{\theta = 1\}
\\
= 0  &\mbox{ on }\{\theta\in (0, 1)\}.
\end{cases}
$$
Since $\left( \frac{1}{a} - \frac{1}{b} \right) > 0$, with
\begin{equation}
\label{def-lambda}
\la = \left( \frac{1}{a} - \frac{1}{b} \right)^{-1}c_l
\end{equation}
this can be written as follows:
\begin{equation}\label{el-theta}
k p
\begin{cases}
\leq \la &\mbox{ on }\{\theta = 0\}
\\
\geq \la &\mbox{ on }\{\theta = 1\}
\\
= \la  &\mbox{ on }\{\theta\in (0, 1)\}.
\end{cases}
\end{equation} 

\subsection{Properties of the adjoint variable}

In this section we continue to assume $f = -e_2$.
Since the right-hand side of \eqref{J-3} is continuous, we see that $p\in C^1([0, 1])$.
Recall from \eqref{stateq} that $K = K(\theta)$ satisfies
$$
(A(\theta)K')' = (1 - t)\cos K \mbox{ and }K(0) = 0,\ K'(1) = 0,
$$
and $K$ is decreasing and on $[0, 1)$ takes values in $(-\frac{\pi}{2}, 0]$.

In order to study the behaviour of $P$, we introduce $\rho : I\to\R$ by
$$
\rho(t) = \cot K(t) = \frac{\cos K(t)}{\sin K(t)},
$$
so clearly $\rho < 0$ on $(0, 1)$, and $\rho(t)\to -\infty$ as $t\downarrow 0$. 
Moreover,
\begin{equation}
\label{rho'} 
\rho' = -\frac{K'}{\sin^2 K}\mbox{ and }A\rho' = -\frac{k}{\sin^2 K}.
\end{equation} 
By Lemma \ref{lem-el} we see that $A\rho'$ is continuous,
positive and strictly decreasing on $(0, 1)$.
The relevance of $\rho$ is that $p'$
is a positive multiple of $Q := P - \rho$. In particular,
$p' = 0$ if and only if $Q = 0$, and the sign of $p'$ equals that of $Q$.
\\
We introduce 
$$
q := AQ' = p - A\rho' = p + \frac{k}{\sin^2 K}
$$
and we compute
\begin{equation}
\label{lej3-a}
q' = - (1 - t) Q \sin K + \left( \frac{k}{\sin^2 K} \right)'.
\end{equation}

\begin{lemma}
\label{lej1} 
We have $q(1) = p'(1) = p(1) = P(0) = 0$, as well as $p(0) < 0$ and $p'(0) = 1$.
\end{lemma}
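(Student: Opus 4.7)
My plan is to dispose of the five boundary identities by direct computation, and then to establish the strict inequality $p(0)<0$ via a two-step argument.

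For the equalities: $P(0)=0$ is simply the definition of $X$. The explicit formula $p'(t)=(1-t)(\cos K-P\sin K)$ from \eqref{J-3}, evaluated at $t=0$ using $K(0)=P(0)=0$ gives $p'(0)=1$, and evaluated at $t=1$ gives $p'(1)=0$. The identity $p(1)=0$ is the natural boundary condition inherited from testing the adjoint equation against $\eta\in X$ that need not vanish at $t=1$. Finally, from $q=p+k/\sin^2 K$ together with $p(1)=k(1)=0$ one obtains $q(1)=0$ as soon as $\sin K(1)\neq 0$; this is so because Proposition~\ref{kbd} yields $K(1)\in[-\pi/2,0]$, while $K(1)=0$ is excluded since $K$ is non-increasing with $K(0)=0$ and $K\equiv 0$ would contradict the state equation $(AK')'=(1-t)\cos K$.

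The only substantive claim is $p(0)<0$, which I will prove in two steps. \emph{Step 1: $P\le 0$ on $[0,1]$.} I will test the weak form of the adjoint equation $(AP')'+(1-t)\sin K\,P=(1-t)\cos K$ against $\eta=P_+\in X$. Since $P_+(0)=0$ and the natural boundary condition gives $P'(1)=0$, the boundary terms from integration by parts vanish and I obtain
\[
-\int_{\{P>0\}}A(P')^2\,\d t+\int_I(1-t)\sin K\,P_+^2\,\d t=\int_I(1-t)\cos K\,P_+\,\d t.
\]
By Proposition~\ref{kbd}, $K\in(-\pi/2,0]$ on $[0,1)$, so $\sin K\le 0$ (rendering the left-hand side non-positive) while $(1-t)\cos K>0$ almost everywhere on $I$ (rendering the right-hand side non-negative, and strictly positive on any set of positive measure where $P_+>0$). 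Forcing equality then gives $P_+=0$ a.e., and by continuity $P\le 0$ on $[0,1]$.

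\emph{Step 2: deduce $p(0)<0$.} Since $p\in C^1([0,1])$ with $p'(0)=1>0$, if one had $p(0)\ge 0$ then $p>0$ on some right neighbourhood $(0,\delta)$. Then $P'=p/A>0$ on $(0,\delta)$, and $P(0)=0$ would force $P>0$ there, contradicting Step~1. Hence $p(0)<0$. The only conceptually non-trivial ingredient is the Stampacchia-type argument in Step~1, and it hinges on the one-sided bound on $K$ furnished by Proposition~\ref{kbd}.
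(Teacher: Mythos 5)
Your proof is correct, and for the only substantive claim, $p(0)<0$, it takes a genuinely different route from the paper. The paper argues entirely at the level of the pointwise identity \eqref{J-3}: since $\cos K\geq 0$ and $\sin K\leq 0$ by Proposition \ref{kbd}, one has $p'\geq 0$ on $\{P\geq 0\}$, and if $p(0)\geq 0$ this sign condition propagates forward from $t=0$ (where $P(0)=0$), forcing $p$ to be non-decreasing on all of $I$; together with $p(1)=0$ this gives $p\equiv 0$, hence $\cos K\equiv 0$, contradicting $K(0)=0$. Your argument instead establishes the stronger global statement $P\leq 0$ on $[0,1]$ by a Stampacchia-type truncation test with $\eta=P_+\in X$ in the weak form of \eqref{adj}, and then localizes at $t=0$ using $p'(0)=1$. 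Both hinge on exactly the same sign information from Proposition \ref{kbd}; the paper's version is more elementary and stays within the ODE picture, while yours buys the comparison-principle fact $P\leq 0$, which is not needed elsewhere in the paper but is a clean by-product (and is consistent with Lemma \ref{lej4}, since $\rho=\cot K<0$). Your additional care in checking $\sin K(1)\neq 0$ before writing $q(1)=p(1)+k(1)/\sin^2K(1)=0$ — via the observation that $K\equiv 0$ is incompatible with the state equation — closes a small gap that the paper's one-line justification ``$q(1)=0$ since $k(1)=0$'' leaves implicit. The remaining identities ($P(0)=0$, $p(1)=0$ as natural boundary condition, and $p'(0)=1$, $p'(1)=0$ by evaluating \eqref{J-3}) are handled exactly as in the paper.
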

\begin{proof}
We have $P(0) = 0$ because $P\in X$, and $p(1) = 0$ (hence $q(1) = 0$ since $k(1) = 0$)
because \eqref{J-3} is an equation in $X'$ involving natural boundary conditions. 
From \eqref{J-3} and since $K(0) = P(0) = 0$, 
we have
$$
p'(0) = \cos K(0) - P(0)\sin K(0) = 1.
$$
Also from \eqref{J-3}, we see $p'(1) = 0$.
Finally, the inequality $p(0) < 0$ follows easily from 
the boundary conditions $p(1) = 0$ and $P(0) = 0$ and the 
observation from \eqref{J-3} that $p'\geq 0$ on $\{P\geq 0\}$.
Indeed, assuming $p(0)>0$ we obtain a straighforward contradiction to $p(1)=0$ 
and assuming $p(0)=0$ we deduce that $p'\equiv 0$, which implies $K\equiv -\frac{\pi}{2}$ and this contradicts $K(0) =0$.
\end{proof}

\begin{lemma}
\label{lej4}
There is $t_0\in [0, 1]$ such that $Q > 0$ on $[0, t_0)$ and $Q\leq 0$ on $[t_0, 1]$.
\end{lemma}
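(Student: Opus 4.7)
The plan is to exploit the ODE \eqref{lej3-a} for $q = AQ'$, namely
\[
q' \;=\; -(1-t)\sin K\cdot Q \;+\; (k/\sin^2 K)'.
\]
Since $K\in(-\pi/2, 0]$ on $[0,1]$, the coefficient $-(1-t)\sin K$ is nonnegative; moreover, as noted just before the lemma, $A\rho' = -k/\sin^2 K$ is strictly decreasing on $(0,1)$, so $(k/\sin^2 K)' > 0$ there. Consequently, at every $t\in (0,1)$ with $Q(t)\geq 0$ we have $q'(t) > 0$. This is the key structural observation: $q$ is strictly increasing on any subinterval of $(0,1)$ on which $Q\geq 0$.

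Next, from $P(0)=0$ and $\rho(t)\to -\infty$ as $t\downarrow 0$ we have $Q(t)\to +\infty$ as $t\downarrow 0$, so $Q>0$ on some right-neighborhood of $0$. I would set
\[
t_0 \;:=\; \inf\{t\in (0,1]: Q(t)\leq 0\},
\]
with the convention $t_0=1$ if this set is empty. Then $t_0>0$ and $Q>0$ on $[0,t_0)$ by construction, and $Q(t_0)=0$ by continuity whenever $t_0<1$.

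To prove $Q\leq 0$ on $[t_0,1]$, I would argue by contradiction: suppose $Q(s)>0$ for some $s\in(t_0,1]$. Since $Q$ is continuous on $(0,1]$, the open set $\{Q>0\}$ has a connected component $(a,b)\subset (t_0,1]$ containing $s$, with $Q(a)=0$ (because $a\geq t_0>0$). As $Q>0$ just to the right of $a$, the derivative satisfies $Q'(a)\geq 0$, hence $q(a)\geq 0$. If $b<1$ then likewise $Q(b)=0$ and $Q'(b)\leq 0$, giving $q(b)\leq 0$; but by the key monotonicity $q$ strictly increases on $(a,b)$, forcing $q(a)=q(b)=0$ and $q\equiv 0$ on $(a,b)$, contradicting $q'>0$ there. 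If instead $b=1$, then $q$ strictly increases on $(a,1)$ up to $q(1)=0$ (Lemma \ref{lej1}), so $q(a)<0$, contradicting $q(a)\geq 0$.

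The main obstacle is pinpointing the correct sign structure $q'>0$ on $\{Q\geq 0\}$; once this is identified, the rest is a clean dichotomy on where a positive component of $Q$ can sit in $(0,1]$. A delicate corner case is the scenario in which $Q>0$ on all of $(0,1)$, so that $t_0=1$ and the statement reduces to $Q(1)\leq 0$; this requires a boundary-layer analysis near $t=1$, leveraging $q(1)=q'(1)=0$ and the Taylor behavior of $k$ and $K$ at $t=1$, rather than the monotonicity argument above.
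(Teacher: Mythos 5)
Your argument is correct and essentially the same as the paper's: both rest on the key inequality $q' \geq (k/\sin^2 K)' > 0$ a.e.\ on $\{Q\geq 0\}$ together with $q(1)=0$ and $Q(0^+)=+\infty$, differing only in that the paper runs the continuation argument forward all the way to $t=1$ to conclude $q\leq 0$ on $\{Q\geq 0\}$, while you localize it to a connected component $(a,b)$ of $\{Q>0\}$ and compare the signs of $q(a)$ and $q(b)$. The ``delicate corner case'' you defer ($Q>0$ on all of $[0,1)$, so that only $Q(1)\leq 0$ remains to be checked) is not addressed in the paper's proof either, and it is immaterial for everything downstream, since the lemma enters only through $p' = -(1-t)(\sin K)\,Q$, which vanishes at $t=1$ regardless of the sign of $Q(1)$.
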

\begin{proof}
As $Q(0) = +\infty$, it is enough to show that $Q' \leq 0$ almost everywhere
on $\{Q \geq 0\}$. As $A$ is positive, this is equivalent to the assertion that
$q \leq 0$ almost everywhere on $\{Q \geq 0\}$.
\\
By \eqref{lej3-a} we have 
\begin{equation}
\label{lej4-1}
q'\geq 0\mbox{ almost everywhere on }\{Q \geq 0\}
\end{equation}
because $k\cdot \sin^{-2} K$ is an increasing function. So if $t_0$ is such that
$Q(t_0)\geq 0$ and $q(t_0) > 0$, then $q$ is nondecreasing on $(t_0, 1)$, which 
can be seen as follows:
Since $q(t_0) > 0$, by continuity of $q$ the set
$$
\{t\in (t_0, 1) : q(t) > 0 \mbox{ on }(t_0, t) \}
$$
is nonempty. Denote by $t_1$ the supremum over this set.
Then $Q$ is increasing on $(t_0, t_1)$ because $Q' = q/A$.
Since $Q(t_0) \geq 0$, this implies that $Q \geq 0$ on $(t_0, t_1)$.
Hence $q$ is nondecreasing on $(t_0, t_1)$ by \eqref{lej4-1}.
Hence $q(t_1) > 0$, so by continuity necessarily $t_1 = 1$.
\\
Therefore, one obtains $q(1) > 0$, contradicting Lemma \ref{lej1}.
\end{proof}

\begin{proposition}
\label{lej5} 
There exists $t_0\in (0, 1]$ such that $p' > 0$ on $[0, t_0)$ and $p'\leq 0$ on $[t_0, 1]$.
Moreover, the following is true:
\begin{itemize}
\item If $t_0 = 1$ then $p < 0$ on $[0, 1)$.
\item If $t_0 < 1$ then there exists $t_1\in (0, t_0)$ such that
$p < 0$ on $[0, t_1)$ and $p > 0$ on $(t_1, 1)$. 
\end{itemize}
\end{proposition}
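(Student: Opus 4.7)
The plan is to push the sign information from Lemma \ref{lej4} through to $p$ itself, then handle the boundary behaviour via Lemma \ref{lej1}, with one rigidity argument needed to rule out a degenerate flat segment.

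\textbf{Step 1: extract $t_0$.} Since $p'$ is a positive multiple of $Q$ (as noted just before Lemma \ref{lej1}), Lemma \ref{lej4} immediately yields a point $t_0 \in [0,1]$ with $p' > 0$ on $[0, t_0)$ and $p' \leq 0$ on $[t_0, 1]$. The fact $t_0 > 0$ follows from $p'(0) = 1$ in Lemma \ref{lej1} (equivalently, $Q(0) = +\infty$ because $\rho(0) = -\infty$). Integrating, $p$ is strictly increasing on $[0, t_0]$ and nonincreasing on $[t_0, 1]$, and combined with $p(0) < 0$ and $p(1) = 0$ from Lemma \ref{lej1}, this narrows the sign pattern to a small case analysis.

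\textbf{Step 2: the case $t_0 = 1$.} Here $p$ is strictly increasing on $[0, 1)$ with $p(1) = 0$, so $p < 0$ on $[0, 1)$ and the first bullet is immediate.

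\textbf{Step 3: the case $t_0 < 1$.} Nonincreasingness of $p$ on $[t_0, 1]$ together with $p(1) = 0$ gives $p \geq 0$ on $[t_0, 1]$. The key claim is $p > 0$ on $[t_0, 1)$; granting this, strict monotonicity of $p$ on $[0, t_0]$ combined with $p(0) < 0 < p(t_0)$ and the intermediate value theorem produces a unique $t_1 \in (0, t_0)$ with $p(t_1) = 0$, yielding $p < 0$ on $[0, t_1)$ and $p > 0$ on $(t_1, 1)$.

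\textbf{Step 4: the rigidity argument (main obstacle).} To prove $p > 0$ strictly on $[t_0, 1)$, suppose for contradiction that $p(t_*) = 0$ at some $t_* \in [t_0, 1)$. Since $p$ is nonincreasing on $[t_*, 1]$ with both endpoint values zero, $p \equiv 0$ on $[t_*, 1]$. From $p = A(\theta)P'$ this forces $P$ to be constant on $[t_*, 1]$, while $p' \equiv 0$ together with the equivalence ``$p' = 0 \iff Q = 0$'' forces $P \equiv \rho$ on the same interval. Hence $\rho$ is constant on a nontrivial subinterval of $(0, 1)$. But $A\rho'$ was already observed (after \eqref{rho'}) to be strictly decreasing on $(0, 1)$, so $A\rho'$, and therefore $\rho'$, cannot vanish on any subinterval of positive length — a contradiction. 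This delivers both $p(t_0) > 0$ and the absence of additional zeros in $(t_0, 1)$, completing the proof. The rest of the argument is a mechanical assembly of Lemmas \ref{lej4} and \ref{lej1} with elementary monotonicity; the only real content is this rigidity step, which ultimately rests on the strict monotonicity of $A\rho'$.
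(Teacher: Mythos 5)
Your proposal is correct and follows essentially the same route as the paper: extract $t_0$ from Lemma \ref{lej4} via the sign identification $p' \sim Q$, handle the endpoints with Lemma \ref{lej1}, and rule out a zero of $p$ on $[t_0,1)$ by showing it would force $p\equiv 0$ and hence $P\equiv\rho$ constant on a nontrivial interval. The only cosmetic difference is that you close the rigidity step by invoking the strict monotonicity (or positivity) of $A\rho'$, whereas the paper concludes that $k$ would be constant and cites Corollary \ref{corbigle} directly — the two contradictions rest on the same underlying fact.
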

\begin{proof}
The first part follows from Lemma \ref{lej4} and our initial observation
that the sign of $p'$ is determined by that of $Q$.
\\
To prove the second part, first note that if $t_0 = 1$ then $p < 0$ on $[0, 1)$
because $p(1) = 0$ and $p$ is increasing.
\\
If $t_0 < 1$ then $p > 0$ on $(t_0, 1)$. In fact, since $p$ is nonincreasing on this interval and 
since $p(1) = 0$, if we had $p(t') = 0$ at some $t'\in (t_0, 1)$ then $p = 0$ 
on $(t', 1)$. By \eqref{J-3}
this would imply that $P = \cot K$ on this interval. And by $AP' = p = 0$ the function $\cot K$ 
and therefore $K$ and thus $k$ would be constant on $(t', 1)$. This would contradict
Corollary \ref{corbigle}.
\\
Since $p$ is strictly increasing on $(0, t_0)$ and $p(0) < 0$ by Lemma \ref{lej1},
and since $p(t_0) > 0$, by continuity there exists precisely one $t_1$ as in the statement.
\end{proof}

\begin{corollary}
\label{lej6}
There exists $t_2\in (0, 1]$ such that $kp > 0$ and $kp$ is strictly
decreasing on $(0, t_2)$ and $kp \leq 0$ on $[t_2, 1]$. In particular,
the set $\{kp = c\}$ has zero length for any $c > 0$.
\end{corollary}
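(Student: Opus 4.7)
The plan is to deduce the claim by a direct sign analysis of $(kp)'$ using only what is already known about $k$ and $p$ separately. First I would record the sign of $k$ on $I$: the state equation \eqref{stateq} reads $k' = (1-t)\cos K$, and by Proposition \ref{kbd} we have $K\in (-\frac{\pi}{2}, 0]$ on $[0, 1)$, so $\cos K > 0$ there. Hence $k' > 0$ on $[0, 1)$; combined with $k(1) = 0$ from Lemma \ref{lej1}, this gives $k < 0$ on $[0, 1)$.

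Next I would split according to the two alternatives provided by Proposition \ref{lej5}. In the case $t_0 = 1$, I set $t_2 = 1$: then $p < 0$ and $p' > 0$ throughout $[0, 1)$, so both summands in
$$
(kp)' = k'p + kp'
$$
are strictly negative on $(0, 1)$, while $kp > 0$ on $[0, 1)$ since both factors are negative. In the case $t_0 < 1$, I set $t_2 = t_1$, where $t_1\in (0, t_0)$ is the point provided by Proposition \ref{lej5}. The same sign analysis applies verbatim on the subinterval $(0, t_1)\subset (0, t_0)$, because $p < 0$ and $p' > 0$ persist there; on $[t_1, 1]$ the Proposition gives $p \geq 0$ (recall $p(t_1) = p(1) = 0$ and $p > 0$ on $(t_1, 1)$), and since $k\leq 0$ on $I$ we conclude $kp \leq 0$ on $[t_2, 1]$.

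The ``in particular'' statement is then immediate: on $(0, t_2)$ the continuous function $kp$ is strictly decreasing, so any positive level is attained at most once there; and $kp \leq 0$ on $[t_2, 1]$ excludes positive levels on the remaining interval entirely. Hence $\{kp = c\}$ contains at most a single point for every $c > 0$, and in particular has zero length.

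I do not expect a genuine obstacle here: once $k' = (1-t)\cos K$ is written down from \eqref{stateq} and the sign of $\cos K$ is read off from Proposition \ref{kbd}, the whole argument reduces to bookkeeping of signs against Proposition \ref{lej5}. The only point requiring a small amount of care is ensuring that the strictness in $(kp)' < 0$ on $(0, t_2)$ is justified by the strict positivity of both $k'$ on $[0, 1)$ and $p'$ on $[0, t_0)\supseteq (0, t_2)$, rather than merely nonnegativity.
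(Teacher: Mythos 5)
Your proposal is correct and follows essentially the same route as the paper: a case split on the two alternatives of Proposition \ref{lej5}, with $t_2 = 1$ in the first case and $t_2 = t_1$ in the second, combined with the fact that $k$ is negative and strictly increasing on $[0,1)$. You merely spell out the product-rule sign bookkeeping that the paper leaves implicit, which is fine.
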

\begin{proof}
Recall that $k$ is negative and strictly increasing.
Let $t_0$ and $t_1$ be as in the conclusion of Proposition \ref{lej5}.
If $t_0 = 1$ then $p$ is negative and strictly increasing $[0, 1)$,
so $kp$ is positive and strictly decreasing.
In this case, therefore, the claim is satisfied with $t_2 = 1$.
Finally, if $t_0\in (0, 1)$, then the claim is satisfied with $t_2 = t_1$.
\end{proof}

The above proof of Lemma \ref{lej4} is self-contained. For variety, we also include
a shorter proof based on the
following maximum principle:
\begin{lemma}\label{maximumprinciple} 
Let $q$, $m : [0, T]\to \R$ be measurable with $m > 0$ and $q\leq 0$ almost everywhere.
Let $u$ be locally absolutely continuous and
such that $mu'$ is locally absolutely continuous, and such that
$$
(mu')' + qu\leq 0\mbox{ almost everywhere on }(0, T)
$$
and $u(0)$, $u(T) \geq 0$.Then $u\geq 0$ on $(0, T)$.
\end{lemma}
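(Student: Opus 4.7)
The plan is to prove the maximum principle by a contradiction argument driven by an energy identity, using the negative part $u^- := \max(-u, 0)$ as a test function. Since $u$ is absolutely continuous with $u(0), u(T) \geq 0$, the function $v := u^-$ is also absolutely continuous, nonnegative, and satisfies $v(0) = v(T) = 0$, so it is an admissible test function for the differential inequality. The goal is to show that if $\{t \in (0,T) : u(t) < 0\}$ were nonempty, an integration-by-parts identity would force $u' = 0$ almost everywhere on this set, which is incompatible with the continuity of $u$ and the boundary data.

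Concretely, I would multiply the inequality $(mu')' + qu \leq 0$ by $v \geq 0$ and integrate over $(0, T)$. Since $mu'$ and $v$ are both absolutely continuous, integration by parts is valid, and the boundary term $[mu' v]_0^T$ vanishes because $v(0) = v(T) = 0$. Using the standard chain rule $v' = -u' \, \mathbf{1}_{\{u < 0\}}$ almost everywhere and the identity $uv = -v^2$, this yields
\begin{equation*}
\int_0^T m (u')^2 \, \mathbf{1}_{\{u<0\}} \, \d t + \int_0^T (-q)\, v^2 \, \d t \leq 0.
\end{equation*}
Because $m > 0$ and $-q \geq 0$ almost everywhere, both integrands are nonnegative, so both integrals must vanish. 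In particular, $u' = 0$ almost everywhere on the open set $\Omega := \{u < 0\}$.

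To conclude, I would decompose $\Omega$ into its (at most countably many) maximal open subintervals $(a, b) \subset (0, T)$. On each such component, $u$ is absolutely continuous with $u' = 0$ almost everywhere, hence constant and equal to some $c < 0$. By continuity of $u$, this forces $u(a) = c < 0$; but either $a = 0$, in which case $u(0) \geq 0$, or $a > 0$, in which case maximality of $(a,b)$ combined with continuity gives $u(a) = 0$. Either way we reach a contradiction, so $\Omega = \emptyset$ and $u \geq 0$ throughout $(0, T)$.

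The only delicate point is ensuring the integration by parts and the chain rule for $u^-$ are justified under the stated regularity (local absolute continuity of $u$ and of $mu'$). This is routine once one recalls that the product of two absolutely continuous functions is absolutely continuous and that $u' = 0$ almost everywhere on $\{u = 0\}$ for any absolutely continuous $u$, so I do not expect a substantial obstacle beyond being careful about these classical facts.
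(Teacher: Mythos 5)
Your proof is correct, but note that the paper itself does not prove Lemma~\ref{maximumprinciple}: it simply quotes it from \cite{Walter}, so there is no in-paper argument to match. Your route is the variational (Stampacchia truncation) one: test the inequality with $v=u^-$, integrate by parts, use $uv=-v^2$ and $v'=-u'\mathbf{1}_{\{u<0\}}$ to force $\int_0^T m(u')^2\mathbf{1}_{\{u<0\}}\,\d t=0$, and conclude that $u$ would be a negative constant on each component of $\{u<0\}$, contradicting the boundary data; all the measure-theoretic facts you invoke (chain rule for $u^-$, $u'=0$ a.e.\ on $\{u=0\}$) are standard and used correctly. The classical ODE proof one would find in \cite{Walter} is instead pointwise: on a component $(a,b)$ of $\{u<0\}$ one has $(mu')'\leq -qu\leq 0$, so $mu'$ is nonincreasing there; if $mu'$ were ever negative, $u$ would be strictly decreasing from that point on and could not return to a nonnegative value at $b$, hence $mu'\geq 0$ and $u$ is nondecreasing on $(a,b)$, giving $u\geq u(a)\geq 0$, a contradiction. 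The pointwise argument buys a little extra robustness: it uses only local absolute continuity on the open interval together with continuity of $u$ up to the endpoints, whereas your integration by parts over all of $(0,T)$ implicitly requires $(mu')'\in L^1(0,T)$ and the vanishing of the boundary terms $m u' v$ at $0$ and $T$ --- i.e.\ absolute continuity of $mu'$ up to the closed interval rather than merely ``locally''. This is harmless in the paper's application (the lemma is applied on $[t_0,2-t_0]$ where $Q$ vanishes at both endpoints and all coefficients are bounded), but if you want the lemma under its minimal stated hypotheses you should either carry out the limiting argument on $[\varepsilon,T-\varepsilon]$ and justify $m(\varepsilon)u'(\varepsilon)v(\varepsilon)\to 0$, or switch to the monotonicity argument; what you buy in exchange is a proof that generalizes verbatim to divergence-form elliptic operators in higher dimensions, where the pointwise argument does not.
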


A proof of Lemma \ref{maximumprinciple} can be found in \cite{Walter}.
In order to apply Lemma \ref{maximumprinciple},
we extend $A$, $P$ and $K$ (and thus $\rho$) evenly to $[0, 2]$ by setting
$$
B(1 + t) = B(1 - t) \mbox{ for }t\in (0, 1]
$$
and $B=A,\,P,\,K$. We introduce the operator $Lu = (Au')' + \left( (1-t)\sin K\right) u$.
So \eqref{lej3-a} becomes 
\begin{equation}
\label{mapr1}
LQ = \left( \frac{k}{\sin^2 K}\right)'\mbox{ on }(0, 2).
\end{equation}
As mentioned below \eqref{rho'}, 
the quantity $k\cdot\sin^{-2} K$ is strictly increasing on $(0, 1)$, hence
the right-hand side of \eqref{mapr1} is positive on $(0, 1)$.
As $A$ and $K$ are even about $1$, the function $k = AK'$ 
is odd about $1$, hence so is $k\cdot\sin^{-2} K$. Therefore the right-hand
side of \eqref{mapr1} is positive on $(0, 2)$.
\\

As $P(0) = 0$ and $\rho(0) = -\infty$, either $Q > 0$ on $[0, 1)$ or
there exists a smallest $t_0\in (0, 1)$ such that $Q(t_0) = 0$.
In the latter case, in view of \eqref{mapr1} and since both $P$ and
$\rho$ are even about $1$, the 
function $Q$ satisfies the boundary value problem
\begin{align*}
LQ > 0 &\mbox{ in }(t_0, 2 - t_0)
\\
Q = 0 &\mbox{ on }\partial (t_0, 2 - t_0).
\end{align*}
Hence Lemma \ref{maximumprinciple} implies that $Q \leq 0$ on $[t_0, 2 - t_0]$;
in particular on $[t_0, 1]$. And by definition $Q > 0$ on $[0, t_0)$.
Therefore we have recovered Lemma \ref{lej4}.

\subsection{The optimal design}

Since $c_l > 0$ and $0 < a < b$, we have $\la > 0$ by its definition in \eqref{def-lambda}. 
Combining \eqref{el-theta} with Corollary \ref{lej6}, we therefore 
obtain the following result (with $t^\ast < t_2$):
\begin{theorem}\label{thm:optimalDesignClassical}
The optimal design is classical and ordered.
More precisely, if $\theta\in L^{\infty}(I)$ is a critical point of $\h J$, then
there exists $t^\ast \in (0, 1)$ such that $A(\theta) = b$ almost everywhere on $(0, t^\ast)$
and $A(\theta) = a$ almost everywhere on $(t^\ast, 1)$.
\end{theorem}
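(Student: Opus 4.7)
The proof plan is to combine the pointwise optimality condition \eqref{el-theta} with the monotonicity information about $kp$ established in Corollary \ref{lej6}. Since $c_l > 0$ and $0 < a < b$, the threshold $\lambda$ defined in \eqref{def-lambda} is strictly positive; this positivity is the decisive fact that will drive the argument.

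First I would argue that no microstructure can appear. By \eqref{el-theta}, the set $\{\theta \in (0,1)\}$ is contained in $\{kp = \lambda\}$. But Corollary \ref{lej6} asserts that for any $c > 0$, the level set $\{kp = c\}$ has zero length, and $\lambda > 0$. Hence $\{\theta \in (0,1)\}$ has measure zero, so up to a null set $\theta$ takes only the values $0$ and $1$, i.e., $\theta = \chi$ for some characteristic function. This already gives classicality; it remains to locate the interface and to show that the hard phase lies to the left.

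Next I would use the detailed structure from Corollary \ref{lej6}: there exists $t_2 \in (0,1]$ such that $kp$ is strictly positive and strictly decreasing on $(0, t_2)$, while $kp \leq 0$ on $[t_2, 1]$. Since $\lambda > 0$, the inequality $kp \geq \lambda$ fails throughout $[t_2, 1]$, so by \eqref{el-theta} we must have $\chi = 0$ on $[t_2, 1]$, i.e., $A(\theta) = a$ there. On $(0, t_2)$, strict monotonicity of $kp$ implies that either $kp > \lambda$ throughout or there is a unique $t^\ast \in (0, t_2)$ with $kp(t^\ast) = \lambda$, and in the latter case $kp > \lambda$ on $(0, t^\ast)$ and $kp < \lambda$ on $(t^\ast, t_2)$. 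By \eqref{el-theta} this forces $\chi = 1$ on $(0, t^\ast)$ and $\chi = 0$ on $(t^\ast, 1)$, as desired.

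The main obstacle I foresee is excluding the two degenerate endpoint cases $t^\ast = 0$ and $t^\ast = 1$, i.e., verifying that at a critical point the interface really lies strictly inside $(0,1)$. The case $t^\ast = 1$ (pure hard phase) would require $kp \geq \lambda$ on all of $(0, t_2)$, which conflicts with $kp(t_2) \leq 0 < \lambda$ and continuity of $kp$, so this is easy. The case $t^\ast = 0$ (pure soft phase) corresponds to $kp(0^+) \leq \lambda$; here one has to invoke the boundary behaviour of $k$ and $p$ recorded in Lemma \ref{lej1} (in particular $p(0) < 0$ and the sign of $k$ near $0$ coming from $K' \leq 0$) to show that $kp(0^+)$ exceeds $\lambda$ for the relevant parameter ranges, or else note that the statement is to be understood under the implicit assumption that genuine optimization occurs. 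Modulo this endpoint bookkeeping, the statement follows directly from the two bullets above.
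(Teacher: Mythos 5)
Your proposal is correct and follows essentially the same route as the paper, whose proof of Theorem \ref{thm:optimalDesignClassical} consists precisely of the observation that $\la>0$ together with the combination of \eqref{el-theta} and Corollary \ref{lej6} (the paper notes only that $t^\ast<t_2$). Your discussion of the endpoint cases is in fact more careful than the paper's one-line argument, which silently ignores the possibility that $kp(0^+)\leq\la$ (e.g.\ for large $c_l$), where the design would be purely soft.
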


In \cite{FonsecaFrancfort} the {\em worst} design for nonlinearly elastic
membranes was studied, with a nonlinear compliance consisting of the sum
of the compliance used here plus the elastic energy. (We refer to \cite{RumpfWirth}
for a discussion of various choices of compliances in the context of nonlinear
elasticity.)
\\
In our setting, too, this worst design problem is much
easier to handle than the optimal design. In fact, there is no need to consider
the adjoint variable $p$: instead of Corollary \ref{lej6}
one merely needs the observation that $k^2$ is not constant on any set
of positive length, which follows readily via the Leibniz rule from
the results in Section \ref{Stateq}. One can then show that the worst design is also
classical and ordered. As expected, the order is reversed with respect
to the optimal design: first the soft phase is used and then the hard phase.
We leave the details to the interested reader.

\section{Numerical discretization of the state equation}\label{sec:numericsStateEquation}

In this section we consider a force $f = - \delta \ e_2$ for $\delta \in \R$,
and we allow inhomogeneous clamped boundary conditions $K(0)=K_0$ ($\dot \gamma(0) = e^{iK_0}$). 
The corresponding curve is given by
\begin{align*}
 \gamma(t) = \int_0^t e^{i(K(s)+K_0)} \ d s \, ,
\end{align*}
where $K \in X = \{K\in W^{1,2}(0,1) : K(0) = 0\}$.
Then, the associated stored energy is given by
\begin{align*}
  \energyStateEquation(K) = \int_0^1 \frac{1}{2} A (K')^2 + \delta (1-t) \sin( K(t) + K_0 ) \ dt \, .
\end{align*}
We use Newton's method to find local minimizer of the stored energy.
It requires to compute the first and second derivatives of the stored energy:
\begin{align*}
 & D\energyStateEquation(K)(\phi) = \int_0^1 A K' \phi' + \delta (1-t) \cos( K(t) + K_0 ) \phi \ dt\,, \\
 & D^2\energyStateEquation(K)(\phi)(\psi) = \int_0^1 A \phi' \psi' - \delta (1-t) \sin( K(t) + K_0 ) \phi \psi \ dt \,,
\end{align*}
where $\phi, \psi \in X$.

\par\bigskip
For the numerical implementation we consider a piecewise affine and continuous Finite Elements. In explicit, we take into account an equidistant grid with $N$ nodes $x_n=\frac{n}{N-1}$ for $n=0,\ldots,N-1$ and associated $N-1$ cells $(x_{n-1},x_{n})$ for $n=1,\ldots, N-1$.
The corresponding grid width is given by $h = \frac{1}{N-1}$.
Then we approximate $K$ in the space $V_h$ of functions, which are continuous and piecewise affine on the above cells.
Here and in what follows, we identify finite element functions and the corresponding coordinate vectors in the hat basis.
We denote the nodal basis functions of $V_h$ by $\basisfunction_h^n$ for $n=0,\ldots,N-1$. 
% \begin{align*}
%  K_h(t) = \sum_{n=0}^{N-1} K_n \basisfunction_n(t) \, .
% \end{align*}
For the numerical integration, we choose a Gaussian quadrature with $Q$ quadrature points per element, where we use $Q=5$ in the implementation
and obtain the approximation 
\begin{align}\label{eq:numericalQuadrature}
 \int_0^1 g(t) \ dt \approx \sum_{l \in I_C} \sum_{q=0}^{Q-1} w_q^l g(x_q^l)
\end{align}
with $w_q^l$ denoting the weight at the quadrature point $x_q^l$.
Applying this quadrature to the stored energy and its derivatives, we get a discrete stored energy 
$\energyStateEquation_h$ on $V_h$ and associated derivatives $D\energyStateEquation_h$, and $D^2\energyStateEquation_h$.

Testing the first derivative with the basis functions, we obtain a vector $R[K] := (R[K]_j)_{j=0,\ldots,N-1}$ with 
$R[K]_j = D\energyStateEquation_h(K_h)(\basisfunction_h^j)$.
Analoguesly, testing the second derivative, we are led to a matrix $M[K] = (M[K]_{ij})_{i,j=0,\ldots,N-1}$ with
$$M[K]_{ij} = D^2\energyStateEquation_h(K_h)(\basisfunction_h^j)(\basisfunction_h^i)\,.$$
Because of the clamped boundary conditions we modify the first row and column of $M[K]$ by setting $M[K]_{0,0} = 0$ and $M[K]_{0,j} = 0 = M[K]_{i,0}$ for $i,j = 1,\ldots,N-1$, and we set $R[K]_0 = 0$.
Finally, Newton's  method for minimization of the stored energy computes a sequence $(K_h^{i})_{i=1,\ldots}$ with 
\begin{align*}
 M[K_h^{i}] ( K_h^{i+1} - K_h^{i} ) = R[K_h^{i}] 
\end{align*}
for given initial data  $K_h^0$. 
To cope with the nonlinearity, we use a multilevel scheme, first solving the problem on a coarse grid, prolongate the obtained result onto a finer grid, and proceed iteratively. 
Here, we take into account a dyadic sequence $N=2^l + 1$ with $l=L_c,\ldots, L_f$, where we usually use $L_c=3$ and $L_f \in {9,10,11}$.

\par\bigskip

For a homogeneous material $A\equiv 1$ we experimentally observe essentially three types of stationary points (see Fig.~\ref{fig:StateEquation}).
First, there is of course a simple configuration where the curve is just turning downwards.
In fact, this appears to be and approximation of the global minimizer of the energy functional $\energyStateEquation$ discussed
in the first part of this article.
Secondly, we get a twisted curve, which can be interpreted physically
as turning the free end of the beam to the other side.
These two configurations are relatively stable under a change of material, i.e.,
taking some simple (resp. twisted) beam as initialization for a different material,
the computed discrete solution in our experiments always turned out to be a simple (resp. twisted) beam again. 
However, there is also a highly unstable configuration in between,
where the beam neither decides to fall to left side nor the right side.

%fffffffffffffffffffffffffffffffffffffffffffffffffffffffffffffffffffffffffffffffff
\begin{figure}[!htbp]
\resizebox{1.0\linewidth}{!}{
\includegraphics{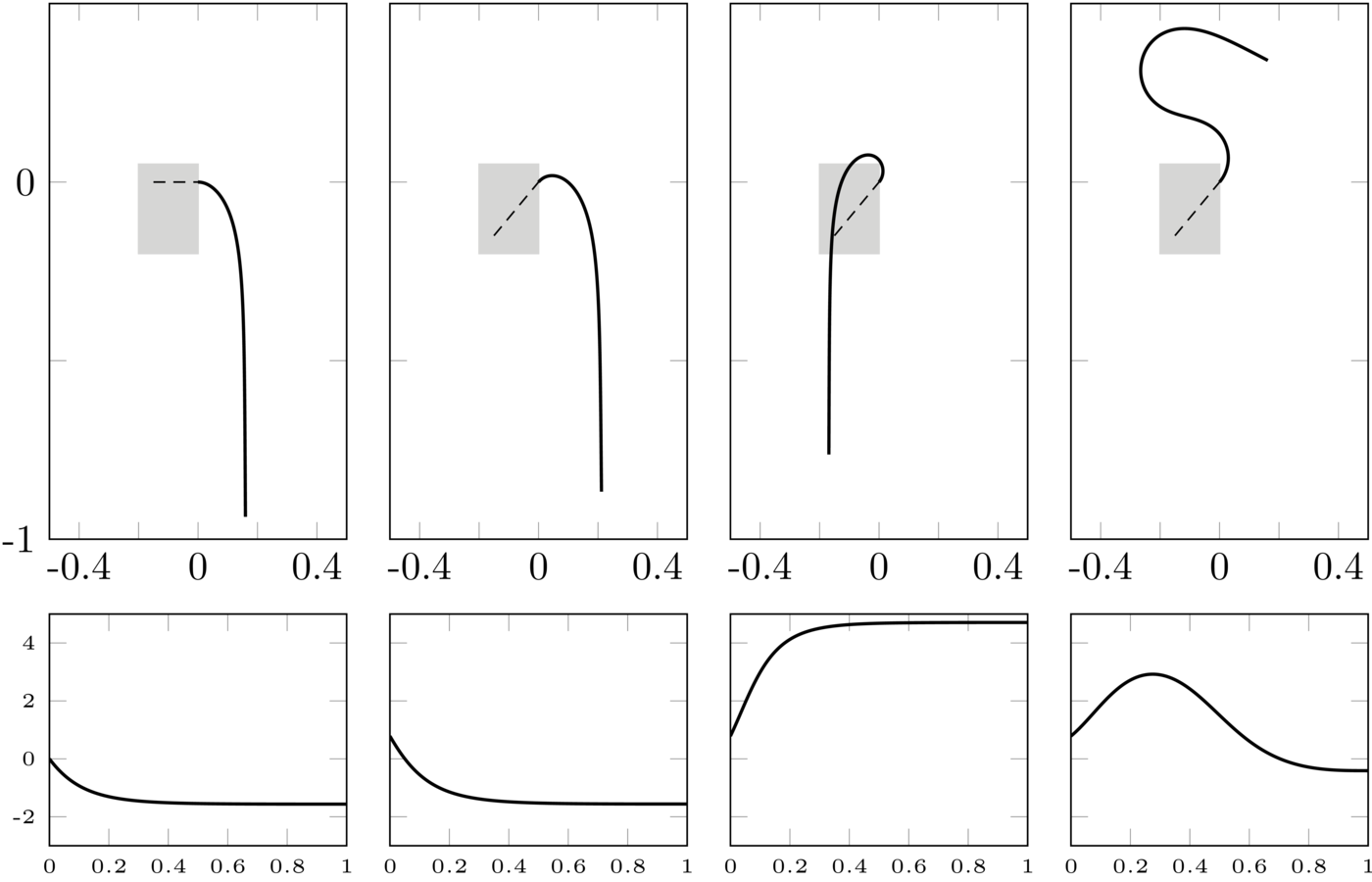}
}
\caption{Different solutions of the state equation (top row) with corresponding phase variable $K$ (bottom row) are shown (from left to right):
simple configurations with $K_0$ and $K_0 = \tfrac{\pi}{4}$, a twisted beam with $K_0 = \tfrac{\pi}{4}$, and an S-shaped configuration with $K_0 = \tfrac{\pi}{4}$.
Here, we have chosen $\delta = 100$, $A = 1$.}
\label{fig:StateEquation}
\end{figure}
%fffffffffffffffffffffffffffffffffffffffffffffffffffffffffffffffffffffffffffffffff

\section{Computing optimal designs}\label{sec:numericsOptDesign}
Our numerical scheme to compute the optimal design is based on a phase field approach.
Following \cite{RumpfWirth} we take into account a phase field function $v:[0,1] \to \R$ with takes values either approximately $1$ for hard material 
with elasticity constant b and approximately $-1$ for soft material with elasticity constant $a$.
Thus, the material coefficient $A$ is assumed to be a function of $v$ and at each point $t \in [0,1]$ 
\begin{align*}
 A(v) = b \chi(v) + a (1-\chi(v) ) \, ,
\end{align*}
where we approximate the characteristic function $\chi$ by
\begin{align*}
 \chi(v)  = \frac{1}{4} (v + 1)^2 \, .
\end{align*}
To ensure the phase-field function to be smooth and essentially to take values $v \in \{ -1, 1 \}$, we use the  1D version of the perimeter functional proposed by 
Modica and Mortola \cite{MoMo77}
\begin{align*}
 \Interface^\epsilon(v) & = \frac{1}{2}\int_0^1 \epsilon \| v' \|^2 + \frac{1}{\epsilon} \frac{9}{16} (v^2-1)^2 \ d t 
\end{align*}
as regularizer, where $\epsilon$ describes the width of the diffuse interface.
Further, the definition of $\chi$ allows us to approximate the length covered by hard material by
\begin{align*}
  \Length(v) = \int_0^1 \chi(v) \ d t \ .
\end{align*}
Altogether, this allows us to define in analogy to Section~\ref{sec:optimalDesign} the (augmented) compliance functional as
\begin{align}
 J(K,v) = \int_0^1 - \delta (1-t) \sin(K(t) + K_0) \ d t + c_{l} \Length(v) + c_{p} \Interface^{\epsilon}(v) \, ,
\end{align}
with coeffcients $c_{l}, c_{p} > 0$.
Thus, the total cost functional in terms of a phase field function is given by
\begin{align} 
 \h J(v) = J(K(v),v) \, ,
\end{align}
where $K(v)$ is a solution to $DE(K)(\phi) = 0$ for all test functions $\phi \in X$ and $E$ takes into account the material coefficient $A(\theta)$.
The task is now to minimize $\h J$ over all phase fields $v$.
For this purpose we can apply the same abstract approach as in Section~\ref{sec:optimalDesign} and obtain as derivative
\begin{align}\label{eq:DerivativeJ}
 D\h J(v)(w) = D_v J (K(v),v)(w) + (D_v D_KE)^{*}(K(v),v) P 
\end{align}
where $P$ is the adjoint variable solving
\begin{align}\label{eq:dualProblemPhasefield}
 (D_K D_KE)^{*}(K(v),v) P = - D_K J (K(v),v) \, .
\end{align}
This requires the derivatives
\begin{align*}
 & D_v J(K,v)(w) = c_{l} \int_0^1 \frac{1}{2} (v+1) w \ d t + c_{p} \int_0^1 \epsilon v'  w' + \frac{9}{8 \epsilon} (v^2 - 1) v w \ d t \\
 & D_K J(K,v)(\phi) = \int_0^1 - \delta (1-t) \cos(K(t) + K_0) \phi \ d t \\
 & D_v D_KE(K,v)(\phi)(w) = \int_0^1 \frac{1}{2} (b-a) (v+1) w K' \phi' \ d t \\
 & D_K D_KE(K,v)(\phi)(\psi) = \int_0^1 A(v) \phi' \psi' - \delta (1-t) \sin(K(t) + K_0) \phi \psi \ d t \, .
\end{align*}
We choose $v_h$ in the finite element space $V_h$ defined in Section~\ref{sec:numericsStateEquation}. 
Let us emphasize that we have to impose the Dirichlet boundary condition for $P$, i.e. $P_h(0) = 0$.
Using the numerical quadrature in \eqref{eq:numericalQuadrature}, we obtain discrete operators $\h J_h$, $J_h$, $\Length_h$, $\Interface^\epsilon_h$, $D_KE_h$, and the corresponding derivatives.
With these functionals and operators at hand, we use the Quasi-Newton-Method (BFGS) to compute minimizers of $\h J_h$.

For the optimization of the phase variable  $K_h(v_h)$ for fixed $v_h$ we can proceed as in Section~\ref{sec:numericsStateEquation}.
Note that, in general, for a given phase field function $v_h$ the solution $K_h(v_h)$ is not necessarily unique,
since we have seen that different solutions of the state equation are possible.
In our numerical scheme, starting with some initial phase, our Newton-Method converges to a state
$K_h(v_h)$ which depends upon this initialization. 

Our numerical experiments reflect the result from Theorem~\ref{thm:optimalDesignClassical} (see Fig.~\ref{fig:optimalDesign}).
Furthermore, they suggest that a similar result remains true for
solutions of the state equation other than the absolute minimizer.
In fact, in our numerical simulations for clamped boundary conditions at $0$ 
the optimal design always gathers the hard material on the left in some interval $[0,t^\ast]$.
In Fig. ~\ref{fig:optimalDesign} we only depict one instance of many tests we performed with three different numerically computed 
local minimizers of the cost functional.

%fffffffffffffffffffffffffffffffffffffffffffffffffffffffffffffffffffffffffffffffff
\begin{figure}[!htbp]
\resizebox{1.0\linewidth}{!}{
 \includegraphics{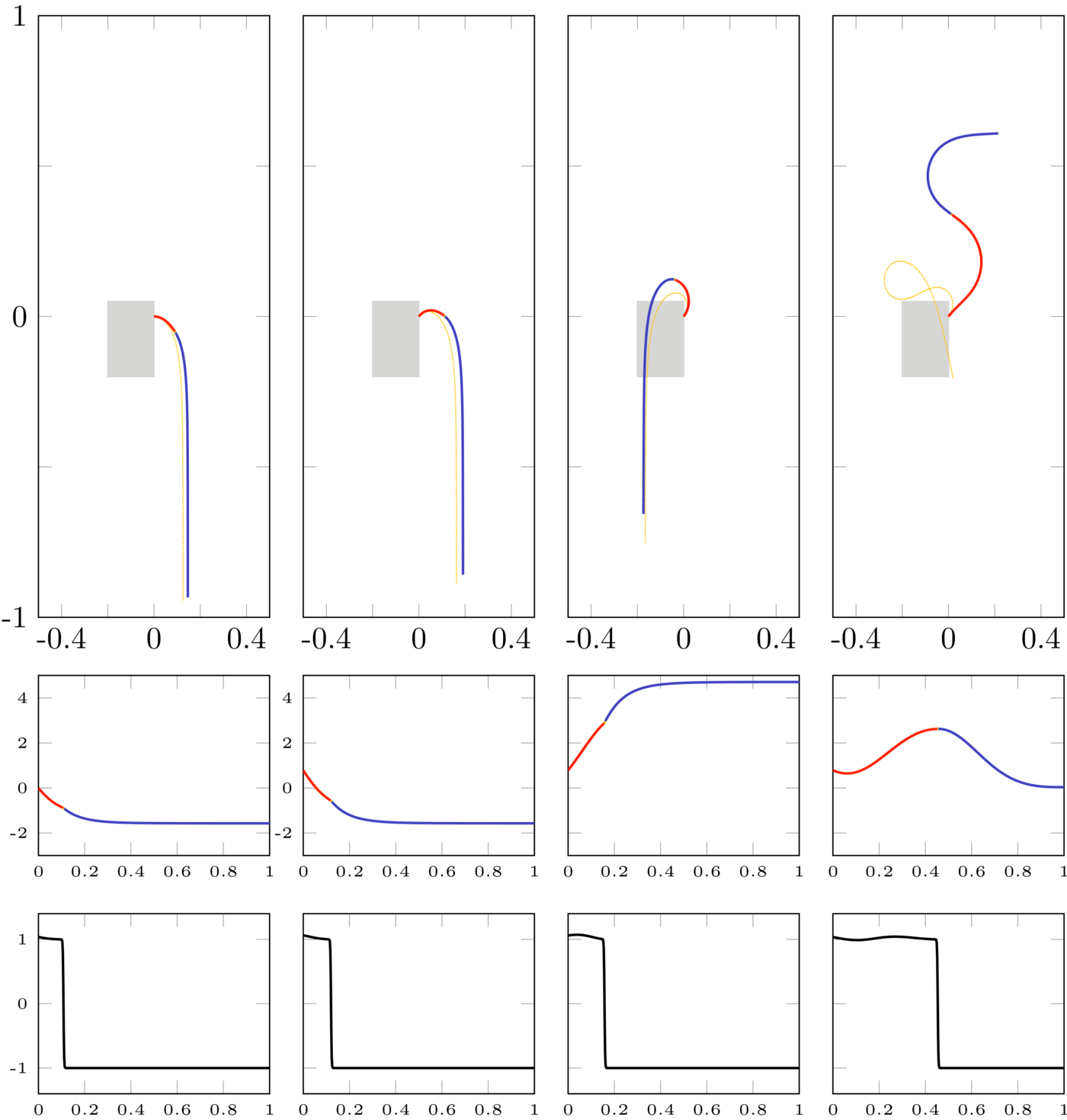}
}
\caption{Top row: Starting from different initializations (dotted orange) we obtain optimal designs (from left to right) for a simple configuration with $K_0=0$ and with $K_0=\tfrac{\pi}{4}$, as well as a twisted configuration with $K_0=\tfrac{\pi}{4}$, and an S-configuration with $K_0=\tfrac{\pi}{4}$. 
In the middle and bottom row we see the corresponding plots of the phase $K$ and phase field $v$.
Here, we have chosen $b=1$, $a=0.5$, $\delta=100$, $c_l = 1$, $c_p =1$, $N=513$, and $\epsilon=\tfrac{1}{N-1}$.
The phase field $v$ is color coded (top and middle) as}
\label{fig:optimalDesign}
\begin{center}
\resizebox{0.4\linewidth}{!}{
\begin{tikzpicture}
   \begin{axis}
     [ hide axis,
       width=1.0\linewidth,
       height=0.2\linewidth,
       at={(0,0)},
       colorbar horizontal,
%        point meta = \thisrow{Pf},
       point meta min = -1.2,
       point meta max = 1.2,
     ]
  \end{axis}
\end{tikzpicture}
}
\end{center}
\end{figure}
%fffffffffffffffffffffffffffffffffffffffffffffffffffffffffffffffffffffffffffffffff
Finally, we have implemented additional constraints prescribing a set of beam positions on $(0,1]$.
In this case, the resulting optimal designs is characterized by separated subintervales with hard material.
Also in these tests we never observed the microstructures even for small values of $c_p$.
Figure~\ref{fig:optimalDesignFixedNodes} shows an instance of these computational results with additional point constraints.
%fffffffffffffffffffffffffffffffffffffffffffffffffffffffffffffffffffffffffffffffff
\begin{figure}[!htbp]
\resizebox{\linewidth}{!}{
 \includegraphics{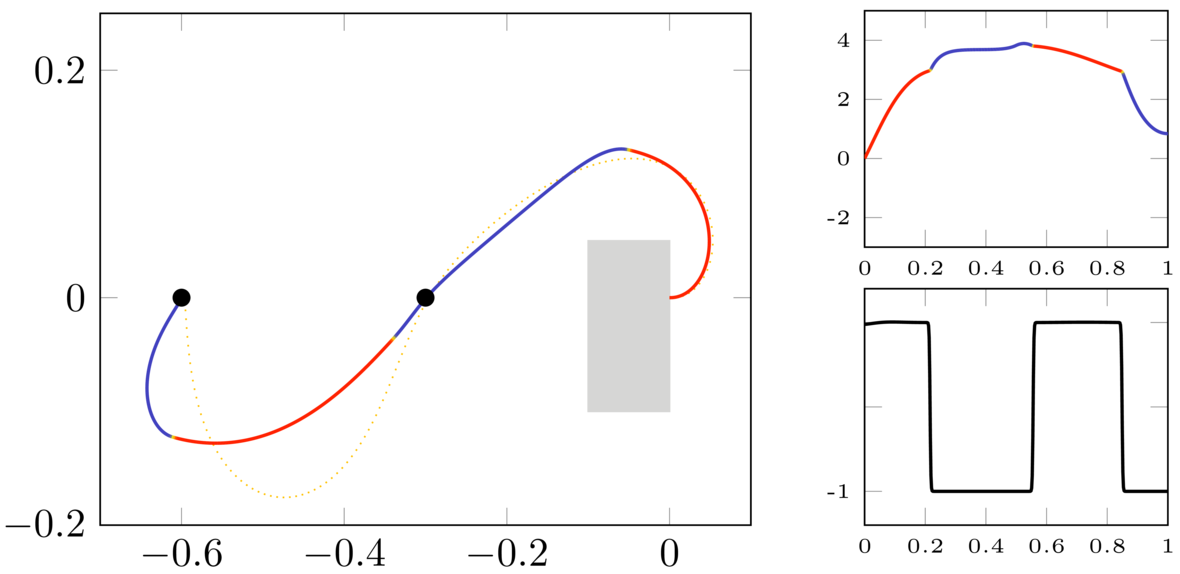}
}
\caption{Optimal designs for a beam under the constraint that three fixed beam positions $(0,0)$, $(-0.3,0)$ and $(-0.6,0)$ at times $t=0,\, 0.5,\, 1$.
Here $b=4.0$, $a=0.5$, $\delta=100$, $c_l = 1$, $c_p = 1$, $N=513$, and $\epsilon=\tfrac{1}{N-1}$. 
Plots and color coding are as in Fig.~\ref{fig:optimalDesign}.}
\label{fig:optimalDesignFixedNodes}
\end{figure}
%fffffffffffffffffffffffffffffffffffffffffffffffffffffffffffffffffffffffffffffffff

\paragraph*{Acknowledgements.} We acknowledge support 
by the German Science Foundation via the CRC 1060 and grant no. HO 4697/1-1.
It is a pleasure to thank Matth\"aus Pawelczyk for helpful comments.

\end{document}